\numberwithin{equation}{section}
\newtheorem{theorem}{Theorem}[section]
\newtheorem{lemma}{Lemma}[section]
\newtheorem{remark}{Remark}[section]
\newtheorem{corollary}{Corollary}[section]
\begin{document}
\title{Gradient Estimates For A Class of Elliptic and Parabolic Equations on Riemannian Manifolds}
\author{Jie Wang}
\address{School of Mathematical Sciences, UCAS, Beijing 100190, China; Institute of Mathematics,
The Academy of Mathematics and Systems of Sciences, Chinese Academy of Sciences}
\email{wangjie9math@163.com}

\begin{abstract}
Let $(N, g)$ be a complete noncompact Riemannian manifold with Ricci curvature bounded from below. In this paper, we study the gradient estimates of positive solutions to a class of nonlinear elliptic equations
$$\Delta u(x)+a(x)u(x)\log u(x)+b(x)u(x)=0$$ on $N$ where $a(x)$ is $C^{2}$-smooth while $b(x)$ is $C^{1}$
and its parabolic counterparts
$$(\Delta-\frac{\partial}{\partial t})u(x,t)+a(x,t)u(x,t)\log u(x,t) + b(x,t)u(x,t)=0$$
on $N\times[0, \infty)$ where $a(x,t)$ and $b(x,t)$ are $C^{2} $ with respect to $x\in N$ while are $C^{1}$ with respect to the time $t$. In contrast with lots of similar results, here we do not assume the coefficients of equations are constant, so our results can be viewed as extensions to several classical estimates.
\end{abstract}
\keywords{gradient estimates; nonlinear equations; maximum principle}
\maketitle

\section{Introduction}
Recently, one pay attention to studying the following elliptic equation defined on a complete, noncompact Riemannian manifold $(N, g)$
\begin{equation}\label{first}
\Delta u(x) + a(x)u(x)\left(\log u(x)\right)^{\alpha}+b(x)u(x)=0,
\end{equation}
where $\alpha\in\mathbb{R}$ and $a(x), b(x)\in C^2(N)$. The equation is linked with gradient Ricci solitons, for example, see \cite{CL, C-C*, Ma, Yang} for detailed explanations.

On the other hand, it is closely related to log-Sobolev constants of Riemmannian manifolds (see \cite{Chu-Y}). Recall that, log-Sobolev constants $S_N$, associated to a closed Riemannian manifold $(N, g)$, are the smallest positive constants such that the logarithmic-Sobolev inequality
$$\int_Nu^2\log u^2dN\leq S_N\int_N|\nabla u|^2dN$$
for all smooth function $u$ defined on $N$ with $\int_Nu^2dN=\mbox{Vol}(N)$. If $\psi$ is an extreamal function which achieves the log-Sobolev constant and satisfies $\int_N\psi^2dN=\mbox{Vol}(N)$, i.e.
$$S_N=\frac{\int_N|\nabla \psi|^2dN}{\int_N\psi^2\log \psi^2dN}=\inf_{\phi\neq 0\in C^1(N)}\frac{\int_N|\nabla \phi|^2dN}{\int_N\phi^2\log \phi^2dN},$$
by variation we know that $\psi$ satisfies the Euler-Lagrange equation
$$\Delta \psi + S_N\psi\log\psi^2=0.$$

In physics, the logarithmic Schr\"odinger equation written by
$$i\epsilon\frac{\partial\Psi}{\partial t}=-\epsilon^2\Delta\Psi+(W(x)+w)\Psi -\Psi\log|\Psi|^2, \quad\Psi:[0, \infty)\times\mathbb{R}^n\to\mathbb{C},\,\, n\geq1,$$
has also received considerable attention. It is well-known that this class of equation has some important physical applications, such as quantum mechanics, quantum optics, open quantum systems, effective quantum gravity, transport and diffusion phenomena, theory of superfluidity and Bose-Einstein condensation (see \cite{Z} and the references therein). In its turn, standing waves solution, $\Psi$, for this logarithmic Schr\"odinger equation is related to the solutions of the following equation
$$\epsilon^2\Delta u + u\log u^2 - V(x)u =0,$$
where $V(x)$ is a real function on $\mathbb{R}^n$. Many mathematicians have also ever studied the existence and properties of solutions to such elliptic equation on an Euclidean spaces (see \cite{A-J, W-Z} and references therein).

In this paper, we mainly study the gradient estimates of the following nonlinear elliptic equation
 \begin{equation}\label{equ}
 \Delta u(x)+a(x)u(x)\log u(x)+b(x)u(x)=0
 \end{equation}
which is the special case of (\ref{first}), and its parabolic counterpart
\begin{equation}\label{equ*}
(\Delta-\frac{\partial}{\partial t})u(x,t)+a(x,t)u(x,t)\log u(x,t) + b(x,t)u(x,t)=0
\end{equation}
on a complete non-compact Riemannian manifold $N$. In (\ref{equ}), $a(x)$ is $C^{2}$-smooth while $b(x)$ is $C^{1}$ on $N$, and in (\ref{equ*}), both $a(x,t)$ and $b(x,t)$ are $C^{2} $ with respect to $x\in N$ while are $C^{1}$ with respect to the time $t$.

Now, let's recall some previous work related closely to this paper. In the case $a(x)\equiv0$ and $b(x)\equiv0$, \eqref{equ*} is the Laplace equation. The corresponding  gradient estimate of \eqref{equ*} has ever been established by Yau in the very famous paper \cite{yau} (for its generalized version, see the remarkable work \cite{C-Y} due to Cheng-Yau). In 1980s, for the case $a(x)\equiv0$ and $b(x)\neq0$ is a smooth function P. Li and S.T. Yau \cite{LY} proved the well-known Li-Yau estimate for the corresponding heat equation and derived a Harnack inequality. There is a huge literature on the gradient estimates to the solutions of some elliptic and parabolic equations. In general, these estimates have been used to find H\"olders' continuity of solutions, sharp estimate on the fundamental solution, estimate on the principal eigenvalue. Here we would like to refer but a few to \cite{CH, C-C, Chow, KZ, LY, Ma, SY, JS, Wang} and references therein.

For the case $a<0$ is a constant, $b=0$, and $\alpha=1$ in (\ref{first}), that is,
\begin{equation}\label{constant}
\Delta u(x) +au(x) \log u(x) = 0 \quad on \quad N,
\end{equation}
Ma \cite{Ma} studied the gradient estimates of the positive solutions to the above elliptic equation. Later, L. Chen and W. Chen \cite{C-C*} improves the estimate of \cite{Ma} and extends it to the case $a>0$.

The first study of the corresponding heat equation of (\ref{constant}) and related nonlinear heat equations can be traced back to Yang \cite{Yang}, and later by Huang and Ma\cite{HM}, Qian \cite{Q*}, Cao et al \cite{CL}, Zhu and Li \cite{Z-L} and H. Dung and N. Dung \cite{Dung}, who derived various gradient estimates and Harnack estimates and noted the relation to gradient Ricci solitons.

The authors \cite{DungK} have ever researched the bounded positive solutions to a heat equation with $V$-Laplacian and variable coefficients which is closely related to (\ref{equ*}). Recently, Ma and Liu \cite{ML} obtained also the gradient estimate for positive solutions to the nonlinear heat equation (\ref{equ*}) with $a(x,t)\equiv a>0$ a constant and $b(x,t)\equiv b(x)$ is a given smooth positive function on the compact Riemannian manifold of dimension $n$.

Very recently, in \cite{P} Peng studied the equation (\ref{equ}) with constant coefficients and obtains the gradient estimates without assuming $u$ is bounded. Moreover, Peng, Wang and Wei studied \eqref{first} with $a\neq 0$ is a constant and $\alpha\neq 1$ in \cite{PWW1}(also see \cite{PWW}), and obtained some Li-Yau type gradient estimates on the positive solutions to these equations. More precisely, for the case $\alpha=\frac{k_{1}}{2k_{2}+1} \geq 2$, they improved the classical methods and employ some delicate analytic techniques to obtain a gradient bound of a positive solution to \eqref{first} which does not depend on such quantities as the bounds of the solution and the Laplacian of the distance function.

On the other hand, in \cite{Ab} the author also considered the following
\[
\Delta_f u(x) +au(x) (\log u(x))^\alpha = 0
\]
defined on a complete smooth metric measure manifold with weight $e^{-f}$ and Bakry-\'Emery Ricci tensor bounded from below, where $a$ and $\alpha$ are real constants. He obtained the local gradient estimates, which depends on the bound of positive solutions, and prove the global gradient estimates on bounded positive solutions to the equation.

While its parabolic counterpart,
\[
\left( \Delta - q(x,t) - \frac{\partial}{\partial t} \right)u(x,t) = a u(x,t)(\log (u(x,t)))^{\alpha},
\]
where $q(x,t)$ is a $C^{2}$ function, $a$ and $\alpha$ ($\alpha\neq 0$ or $1$) are constants, was also considered by some mathematicians(see \cite{C-C*, Yang, Z-L}). Besides, Wu \cite{W} and Yang and Zhang \cite{Y-Z} also paid attention to a similar nonlinear parabolic equation defined on some kind of smooth metric measure space.

As we have konwn, when $a(x)$ (or $a(x,t)$) is a constant function, there are lots of results about the gradient estimates of the equations (\ref{equ}) and (\ref{equ*}). However, we note that it seems that few mathematicians consider the above equations when $a(x)$ is a function defined on $N$ except for \cite{ML} and \cite{DungK}. This stimulates us to study the gradient estimates for the case $a$ is not a constant. So, in the present paper we are intend to extend the results on gradient estimates of the positive solutions to (\ref{equ}) or (\ref{equ*}) with constant coefficients in \cite{C-C, Ma, Yang} to the case (\ref{equ}) or (\ref{equ*}) with variable coefficients. In particular, we can extend some results in \cite{ML} to the case $N$ is a noncompact manifold, $a(x,t)$ is not a constant function and $b(x,t)$ is a bounded real function.

It seems that the methods in \cite{P} can not be used to deal with the variable coefficient equation here directly. However,  by improving the techniques in \cite{P} (also see \cite{PWW, PWW1}), we can also obtain some similar gradient estimates for the equation (\ref{equ}) with variable coefficient. As for (\ref{equ*}), our basic ideas mainly come from \cite{Yang}, but we need also to adopt some methods and techniques from \cite{PWW, PWW1}.

In order to state our main results, we need to introduce some notations first. Throughout this paper, let the symbol $( g )^{+}$ denote
$$( g )^{+}\equiv\sup\limits_{x\in B_{p}(2R)}\max(g(x),0),$$
or
$$[g ]^{+}\equiv\sup\limits_{(x,t)\in B_{p}(2R)\times(0,\infty)}\max(g(x,t),0).$$
Now we are in the position to state the main results of this paper.

\begin{theorem}\label{thm1}
	Let $(N, g)$ be a complete non-compact Riemannian manifold of dimension n $ \geq $ 2, p is a point on N, and $ B_{p}(2R) $ is a geodesic ball of radius 2R around p and does not intersect with $ {\partial N} $. Let u be positive smooth solutions to equation (\ref{equ}) and $ f=\log u$. Suppose that the Ricci curvature of $N$ denoted by $ Ric_N$ with $Ric_N\geq -K$ on $ B_{p}(2R) $, where $K$ is a nonnegative constant, $|\nabla a|$ and $|\nabla b|$ are bounded. We have
	\medskip
	
	(1). If $a(x)\geq2A_{1}>0$ where $A_{1}$ is a positive constant, $\Delta a$ has a lower bound while b has a upper bound on $ B_{p}(2R)$, then there exist constants $C_1>0$, $C_2>0$ and $M_{1}$ which is a nonnegative constant depending on the bounds of $a(x)$, $b(x)$, $\nabla a(x)$, $\nabla b(x)$, $K$ and the lower bound of $\Delta a(x)$, such that the following local estimate holds true on $B_{p}(R)$
	\begin{equation}\label{1.3}
		\begin{split}
			{\left| \nabla f \right|}^2+(A_1+a)f
			\leq &n\left\lbrace        2B+3n\left( \frac{a^2}{A_1^2}\right) ^+\frac{C_{1}^2}{R^2}+\left( 3K+3\right) \left(\frac{a}{A_1} \right)^+\right\rbrace\\
			&+n\left\lbrace \frac{8}{n}\left( M_1-b\right)^++\left( \frac{\Delta a}{A_1}\right)^++5\left( a\right)^+ \right\rbrace.
		\end{split}
	\end{equation}
	Here $B=\frac{(n-1)(1+\sqrt{K}R)C_{1}^{2}+C_{2}}{R^{2}}$.
	\medskip
	
	(2). If $ A_3\leq a\leq 2A_2<0 $ where $ A_{2} $ and $ A_3 $ are negative constants, $\Delta a(x)$ and $b$ have upper bounds, then there exist constants $C_1>0$, $C_2>$ and $M_{2}$ which is a nonnegative constant depending on the bounds of $a$, $b$, $\nabla a(x)$, $\nabla b(x)$, $K$ and the upper bound of $\Delta a$, such that the local estimate on $B_{p}(R)$ holds true
	\begin{equation}\label{1.4}
		\begin{split}
			{\left| \nabla f \right|}^2+(A_2+a)f
			\leq& n\left\lbrace        2B+3n\left( \frac{a^2}{A_2^2}\right) ^+\frac{C_{1}^2}{R^2}+\left( 3K+3\right) \left(\frac{a}{A_2} \right)^+\right\rbrace\\
			&+n\left\lbrace \frac{8}{n}\left( M_2-b\right)^++\left( \frac{\Delta a}{A_2}\right)^+ + \left(-\frac{3a}{2}\right)^+\right\rbrace.
		\end{split}
	\end{equation}
	Here $B$ is the same as in the above.
\end{theorem}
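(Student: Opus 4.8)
The plan is to run a Yau-type maximum-principle argument on the auxiliary function $w := |\nabla f|^2 + (A_1+a)f$ (in part (1); one uses $A_2$ in place of $A_1$ in part (2)). First I would rewrite (\ref{equ}) in terms of $f = \log u$: since $\Delta u = u\bigl(\Delta f + |\nabla f|^2\bigr)$, the equation becomes $\Delta f + |\nabla f|^2 + af + b = 0$, and using $|\nabla f|^2 = w - (A_1+a)f$ this gives the clean identity $\Delta f = -w + A_1 f - b$; this is exactly what makes $A_1$ the ``right'' coefficient in the definition of $w$. Next I would apply the Bochner formula together with $\mathrm{Ric}_N \ge -K$ and $|\nabla^2 f|^2 \ge \tfrac1n(\Delta f)^2$ to get $\tfrac12\Delta|\nabla f|^2 \ge \tfrac1n(\Delta f)^2 - K|\nabla f|^2 + \langle\nabla f, \nabla\Delta f\rangle$, then compute $\Delta w = \Delta|\nabla f|^2 + (A_1+a)\Delta f + 2\langle\nabla a, \nabla f\rangle + f\,\Delta a$ and substitute $\nabla\Delta f = -\nabla|\nabla f|^2 - f\nabla a - a\nabla f - \nabla b$, together with the identities above. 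The outcome is a differential inequality of the schematic form
\[
\Delta w \;\ge\; \tfrac2n(\Delta f)^2 - 2K|\nabla f|^2 + \bigl(\text{linear in }w,f\bigr) + \bigl(\text{terms controlled by }|\nabla a|, |\nabla b|, \Delta a,\ \langle\nabla f,\nabla w\rangle\bigr).
\]

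Then I would introduce a standard cutoff function $\phi$ with $\phi\equiv 1$ on $B_p(R)$, $\operatorname{supp}\phi\subset B_p(2R)$, $0\le\phi\le 1$, $|\nabla\phi|^2 \le \tfrac{C_1^2}{R^2}\phi$, and $\Delta\phi \ge -B$ on $B_p(2R)$, the last bound being precisely where $B = \tfrac{(n-1)(1+\sqrt K R)C_1^2 + C_2}{R^2}$ enters, via the Laplacian comparison theorem under $\mathrm{Ric}_N \ge -K$. Let $x_0$ be a maximum point of $\phi w$ on $\overline{B_p(2R)}$. If $(\phi w)(x_0) \le 0$ then $w\le 0$ on $B_p(R)$ and (\ref{1.3}) holds trivially, since its right-hand side is a sum of nonnegative terms; otherwise $w(x_0) > 0$, and at $x_0$ one has $\phi\nabla w = -w\nabla\phi$ and $\Delta(\phi w) = \phi\Delta w + 2\langle\nabla\phi,\nabla w\rangle + w\Delta\phi \le 0$. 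Multiplying by $\phi$, using $\phi\nabla w = -w\nabla\phi$, $|\nabla\phi|^2\le\tfrac{C_1^2}{R^2}\phi$, $\Delta\phi\ge -B$, substituting $\langle\nabla f,\nabla w\rangle = -\tfrac w\phi\langle\nabla f,\nabla\phi\rangle$, and recalling $|\nabla f|^2 = w - (A_1+a)f$, everything reduces at $x_0$ to a single scalar inequality in $y := (\phi w)(x_0)$, the values $\phi(x_0)$ and $f(x_0)$, and the prescribed bounds on $a, b, \nabla a, \nabla b, \Delta a, K$.

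The last step is to extract a bound on $y$. The dominant term is $\tfrac2n\phi^2(\Delta f)^2$ with $\Delta f = -w + A_1 f - b$; I would repeatedly apply Young's inequality to absorb the ``bad'' contributions into $\tfrac2n\phi^2 w^2$ and the cutoff remainders: the gradient cross-terms $\tfrac w\phi|\nabla f|\,|\nabla\phi|$ produce the $3n(a^2/A_1^2)^+\tfrac{C_1^2}{R^2}$ and $(3K+3)(a/A_1)^+$ contributions, the term $2K|\nabla f|^2$ and the terms involving $|\nabla a|$, $|\nabla b|$ and the lower bound of $\Delta a$ get lumped into the constant $M_1$ and appear as $\tfrac8n(M_1-b)^+$ together with $(\Delta a/A_1)^+$, and the remaining linear pieces coming from $(A_1+a)\Delta f$ and $-2a|\nabla f|^2$ give the $5(a)^+$ term. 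The delicate point --- and the step I expect to be the main obstacle --- is the bookkeeping of the sign of $f=\log u$, which is not bounded: the $A_1 f$ and $(A_1+a)f$ terms must be handled by exploiting that $|\nabla f|^2 = w-(A_1+a)f\ge 0$ forces $f\le w/(A_1+a)$ when $a\ge 2A_1>0$, while a very negative $f$ only helps, since it makes $(\Delta f)^2$ large; arranging all the constants to line up exactly as in (\ref{1.3}) is where the care is needed. Once $y$ is bounded, restricting to $B_p(R)$, where $\phi\equiv 1$, yields (\ref{1.3}). Part (2) follows in the same way with $A_2<0$ replacing $A_1$, the inequalities involving the sign of $a$ reversed, and the $5(a)^+$ term replaced by $(-3a/2)^+$, giving (\ref{1.4}).
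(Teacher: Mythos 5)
Your overall strategy is the paper's: the paper works with the auxiliary function $G=|\nabla f|^2+(A+a)f+M$ (it carries the constant $M=M_1$ or $M_2$ inside $G$ and chooses it so that two explicit combinations of the bounds on $b$, $|\nabla a|$, $|\nabla b|$, $K$ and $\Delta a$ become nonnegative), derives the Bochner-type inequality of Lemma 2.1, uses the Li--Yau cutoff with Calabi's trick and the Laplacian comparison theorem (which is exactly where $B$ enters), applies the maximum principle at the maximum of $\phi G$, and closes the estimate with Young's inequality and a case analysis on $f=\log u$. Deferring the introduction of $M_1$ to the end, as you propose, is only a bookkeeping difference.

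The one step that would fail as written is the assertion that ``a very negative $f$ only helps, since it makes $(\Delta f)^2$ large.'' After substituting $\Delta f=-G+A_1f+M_1-b$ and $|\nabla f|^2=G-(A_1+a)f-M_1$, the differential inequality contains terms linear in $f$ (in your normalization, $f\,\Delta a$, $2K(A_1+a)f$, $A_1(A_1+a)f$, etc.), which the paper collects into $f\bigl(\frac{4(M_1-b)A_1}{n}+(2K+2)(A_1+a)+\Delta a\bigr)$; the coefficient is nonnegative precisely by the choice of $M_1$, so for $f<0$ this term is negative and is not automatically dominated. This is why the paper's Case 3 (and the corresponding subcases of Cases 5--6 in part (2)) splits further: either the offending combination (together with the Young remainder of the cutoff cross-term) is nonnegative at the maximum point and can be dropped, or it is nonpositive and then, using $f<0$, a purely \emph{linear} inequality already bounds $G$ --- that branch, inequality (\ref{2.31}), is the source of the $(\Delta a/A_1)^+$ and $(3K+3)(a/A_1)^+$ terms in (\ref{1.3}). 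Your fallback of absorbing the linear-in-$f$ terms into the $\frac{2A_1^2f^2}{n}$ piece of $(\Delta f)^2$ by Young does work in principle, but it produces squared quantities (of the bounds on $\Delta a$, $K$, $b$, $a$) rather than the linear ones displayed in (\ref{1.3}); since $M_1$ is only required to be a nonnegative constant depending on those same bounds, the surplus can be folded into $\frac{8}{n}(M_1-b)^+$, so your plan is repairable, but reproducing the estimate in exactly the stated form needs the paper's dichotomy. A smaller point: in the regime $f\ge0$ the paper splits at $f=\frac{n(A_1+a)}{2A_1}$ rather than at $0$, so that $\frac{2A_1^2f^2}{n}-A_1(A_1+a)f\ge0$ can be discarded; with a split at $f=0$ alone you must still control $-A_1(A_1+a)f$ for small positive $f$, which the paper does via $A_1(A_1+a)f\le A_1G$, accounting for the extra additive $A_1$ in (\ref{2.27}).
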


Using the results of Theorem \ref{thm1}, we obtain the following estimates about the upper or lower bounds of the global solutions to (\ref{equ}).

\begin{corollary}\label{c1.1}
	Let N be a complete non-compact without boundary. If u is a global solution to equation (\ref{equ}) and in addition that $ b\geq b_1$ and $\left| \Delta a\right|  \leq a_1$ on $N$, where $ b_1 $ and $ a_1\geq 0$ are constants.
	
	(1). If the conditions on $a(x)$ and $b(x)$ in (1) of Theorem \ref{thm1} are satisfied on $N$, moreover, $0<2A_1 \leq a \leq A_4 $ where $A_4 $ is a positive constant on $N$, then
	\begin{equation*}
		u\leq e^{n\left\lbrace \frac{(K+1)A_4}{A_1^2}+\frac{8(M_1-b_1)}{3nA_1}+\frac{a_1}{3A_1^2}+\frac{5A_4}{3A_1}\right\rbrace }.
	\end{equation*}
	
	(2). If the conditions on $a(x)$ and $b(x)$ in (2) of Theorem \ref{thm1} are satisfied on $N$, then
	\begin{equation*}
		u\geq e^{n\left\lbrace \frac{(K+1)A_3}{A_2^2}+\frac{8(M_2-b_1)}{3nA_2}-\frac{a_1}{3A_2^2}-\frac{A_3}{2A_2}\right\rbrace }.
	\end{equation*}
\end{corollary}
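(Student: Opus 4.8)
The plan is to obtain both inequalities by letting the radius $R$ tend to infinity in the local estimates of Theorem \ref{thm1} and then performing elementary sign bookkeeping. Since $N$ has empty boundary, for every $R>0$ the ball $B_p(2R)$ is well defined and does not meet $\partial N$, so Theorem \ref{thm1} applies on $B_p(R)$ for every $R$. The constants $C_1,C_2$ and $M_1,M_2$ produced there do not depend on $R$ (they depend only on $n$ and on the global bounds assumed in the Corollary), and $a$ is globally bounded by hypothesis, so in \eqref{1.3}--\eqref{1.4} the quantities $2B=2\frac{(n-1)(1+\sqrt K R)C_1^2+C_2}{R^2}$ and $3n(a^2/A_i^2)^+C_1^2/R^2$ (here $i\in\{1,2\}$) tend to $0$ as $R\to\infty$. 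Fixing an arbitrary point of $N$, letting $R\to\infty$, and noting that the point was arbitrary, one obtains the global inequalities
\begin{equation*}
|\nabla f|^2+(A_1+a)f\le n\Big\{(3K+3)\big(\tfrac{a}{A_1}\big)^++\tfrac{8}{n}(M_1-b)^++\big(\tfrac{\Delta a}{A_1}\big)^++5(a)^+\Big\}
\end{equation*}
on $N$ under the hypotheses of part (1), and
\begin{equation*}
|\nabla f|^2+(A_2+a)f\le n\Big\{(3K+3)\big(\tfrac{a}{A_2}\big)^++\tfrac{8}{n}(M_2-b)^++\big(\tfrac{\Delta a}{A_2}\big)^++\big(-\tfrac{3a}{2}\big)^+\Big\}
\end{equation*}
under those of part (2).

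For part (1), I would discard the nonnegative term $|\nabla f|^2$ and bound the right-hand side using $2A_1\le a\le A_4$, $|\Delta a|\le a_1$ and $b\ge b_1$, which give $(a/A_1)^+\le A_4/A_1$, $(a)^+\le A_4$, $(\Delta a/A_1)^+\le a_1/A_1$ and $(M_1-b)^+\le M_1-b_1$. Since $a\ge 2A_1>0$ forces $A_1+a\ge 3A_1>0$, dividing by $A_1+a$ and using that the right-hand side is nonnegative yields
\[
f\le\frac{1}{3A_1}\,n\Big\{(3K+3)\tfrac{A_4}{A_1}+\tfrac{8}{n}(M_1-b_1)+\tfrac{a_1}{A_1}+5A_4\Big\}=n\Big\{\tfrac{(K+1)A_4}{A_1^2}+\tfrac{8(M_1-b_1)}{3nA_1}+\tfrac{a_1}{3A_1^2}+\tfrac{5A_4}{3A_1}\Big\};
\]
exponentiating (recall $u=e^f$) gives the asserted upper bound for $u$.

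Part (2) proceeds identically except that the relevant coefficient $A_2+a$ is now negative, so dividing reverses the inequality. With $A_3\le a\le 2A_2<0$, $|\Delta a|\le a_1$, $b\ge b_1$ one has $(a/A_2)^+\le A_3/A_2$, $(-\tfrac{3a}{2})^+\le-\tfrac{3A_3}{2}$, $(\Delta a/A_2)^+\le-a_1/A_2$ and $(M_2-b)^+\le M_2-b_1$. Since $a\le 2A_2$ gives $A_2+a\le 3A_2<0$, hence $|A_2+a|\ge 3|A_2|$, dividing by the negative number $A_2+a$ (and using nonnegativity of the right-hand side) gives
\[
f\ge\frac{1}{3A_2}\,n\Big\{(3K+3)\tfrac{A_3}{A_2}+\tfrac{8}{n}(M_2-b_1)-\tfrac{a_1}{A_2}-\tfrac{3A_3}{2}\Big\}=n\Big\{\tfrac{(K+1)A_3}{A_2^2}+\tfrac{8(M_2-b_1)}{3nA_2}-\tfrac{a_1}{3A_2^2}-\tfrac{A_3}{2A_2}\Big\},
\]
and exponentiating gives the asserted lower bound for $u$.

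There is no genuine analytic difficulty here once Theorem \ref{thm1} is available; the only points needing care are (i) verifying that the error terms carrying $1/R^2$ and $\sqrt K/R$ really vanish as $R\to\infty$, which relies on $C_1,C_2,M_1,M_2$ being $R$-independent and on $a$ being globally bounded on $N$, and (ii) keeping the signs of $A_1+a$ and $A_2+a$ straight so that the division yields an upper bound in case (1) and a lower bound in case (2).
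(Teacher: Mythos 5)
Your proof is correct and follows exactly the same route as the paper: let $R\to\infty$ in \eqref{1.3}--\eqref{1.4} so that the $B$ and $C_1^2/R^2$ terms disappear, discard $|\nabla f|^2\ge 0$, bound each remaining bracketed term by its global hypothesis ($a\le A_4$, $b\ge b_1$, $|\Delta a|\le a_1$), and divide by $A_1+a\ge 3A_1>0$ in part (1), respectively by $A_2+a\le 3A_2<0$ in part (2) with the attendant sign reversal. In fact you are slightly more careful than the paper's own write-up: the displayed intermediate inequality in the paper's proof of (1) drops the $(\Delta a/A_1)^+$ contribution, even though the corresponding $a_1/(3A_1^2)$ term is present in the stated conclusion, and you restore it explicitly.
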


For heat equation (\ref{equ*}) we obtain the following Li-Yau type estimates:
\begin{theorem}\label{thm2}
Let (N,g) be a complete non-compact Riemannian manifold of dimension  $n\geq $ 2, $p$ is a point on $N$, and $ B_{p}(2R) $ is a geodesic ball of radius $2R$ which does not intersect with ${\partial N} $. Let u be positive smooth solutions to equation (\ref{equ*}) on $N\times\ [0,\infty) $, $0<u\leq D$ and $f=\log\frac{u}{D}$ for some positive constant $D$. Suppose that the Ricci curvature of $N$ denoted by $Ric_N$ with $Ric_N\geq -K$ where $K$ is a nonnegative constant on $B_{p}(2R)$, $ a, b, |\nabla a|, |\nabla b|, |a_t|$ are bounded and $\Delta b$ has a lower bound on $B_{p}(2R)\times(0,\infty)$.
	
Then, there exist $C_1>0$, $C_2>0$ and $A$ which is a positive constant and strictly larger than $[a]^+$ on $B_p(2R)\times(0,\infty)$ and a constant $M$ depending on the bounds of $a, b, |\nabla a|, |\nabla b|, a_t$ and $\Delta b$ on $B_{p}(2R)\times(0,\infty)$, such that the following local estimate holds on $B_p(R)\times(0,\infty)$
\begin{equation}\label{1.5}
\begin{split}
&{\left| \nabla f \right|}^2+(A+a)f-2f_t\\			
&\leq\frac{4n}{t}+4n\left([a]^+ + B+\frac{nC_1^2}{R^2}+\frac{2[M-a\log D]^+}{n}\right) \\
&\hspace*{1.2em}+4n\left\lbrace \frac{1}{2}\left[ -\left( A-2K-2-\left| \log D \right|\right) \right]^++\frac{[\Delta a+a_t]^+}{4(A-[a]^+)} \right\rbrace.
\end{split}
\end{equation}
Here  $B =\frac{(n-1) (1+\sqrt{K}R)C_{1}^{2}+C_{2}}{R^{2}}$.
\end{theorem}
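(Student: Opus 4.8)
The plan is to run the classical Li--Yau maximum-principle scheme, adapted to variable coefficients by combining the substitution used in \cite{Yang} for the constant-coefficient heat equation with the auxiliary-function manipulations of \cite{PWW,PWW1}. Writing $u=De^{f}$, so that $f=\log(u/D)\le 0$, equation \eqref{equ*} becomes
\[
\Delta f+|\nabla f|^{2}-f_{t}+af+a\log D+b=0,\qquad\text{i.e.}\qquad \Delta f-f_{t}=-|\nabla f|^{2}-af-a\log D-b .
\]
The quantity to be controlled is $\omega:=|\nabla f|^{2}+(A+a)f-2f_{t}$, where the constant $A$ is chosen with $A>[a]^{+}$ so that $A+a\ge A-[a]^{+}>0$; since $f\le 0$, this fixes the sign of the term $(A+a)f$ and of several terms produced below. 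I would localise with $\phi(x)=\psi(\rho(x)/R)$, $\rho=d(p,\cdot)$, where $\psi$ is the standard cutoff equal to $1$ on $[0,1]$, vanishing on $[2,\infty)$, nonincreasing, with $|\psi'|^{2}/\psi\le C_{1}$ and $\psi''\ge-C_{2}$, and then study $G:=t\phi\,\omega$ on $B_{p}(2R)\times[0,T]$ for an arbitrary $T>0$.

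The core computation is the evolution inequality for $\omega$. From the Bochner formula and $Ric_{N}\ge-K$,
\[
(\Delta-\partial_{t})|\nabla f|^{2}\ \ge\ \frac{2}{n}(\Delta f)^{2}+2\langle\nabla f,\nabla(\Delta f-f_{t})\rangle-2K|\nabla f|^{2},
\]
while differentiating the identity $\Delta f-f_{t}=-|\nabla f|^{2}-af-a\log D-b$ in $x$ and in $t$ gives expressions for $\langle\nabla f,\nabla(\Delta f-f_{t})\rangle$ and for $(\Delta-\partial_{t})f_{t}$ in terms of $|\nabla f|^{2}$, $f$, $f_{t}$, $\nabla a$, $\nabla b$, $\Delta a$, $a_{t}$, $\Delta b$, $\log D$. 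Assembling these, $(\Delta-\partial_{t})\omega$ splits into: a good term $\frac{2}{n}(\Delta f)^{2}$, which after inserting $\Delta f=f_{t}-|\nabla f|^{2}-af-a\log D-b$ produces controllable positive quadratic terms, in particular an $a^{2}f^{2}$ term that will dominate the errors linear in $f$; inner-product terms $\langle\nabla f,\nabla\omega\rangle$, $f\langle\nabla f,\nabla a\rangle$, $\langle\nabla f,\nabla b\rangle$, estimated by Cauchy--Schwarz and Young's inequality against $|\nabla f|^{2}$ and $\frac{1}{n}(\Delta f)^{2}$; and lower-order terms built from $\Delta a,a_{t},\Delta b,\nabla a,\nabla b,\log D,K$, which are bounded by hypothesis and get absorbed into the constant $M$ (this is exactly why $M$ is allowed to depend on $\sup|a|,\sup|b|,\sup|\nabla a|,\sup|\nabla b|,\sup|a_{t}|$ and $\inf\Delta b$), leaving the residual contributions $[M-a\log D]^{+}$ and $[\Delta a+a_{t}]^{+}$. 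Completing the square in the $f$-linear terms against the coefficient $A-[a]^{+}$ is what generates the factor $\frac{1}{4(A-[a]^{+})}$, and collecting the $|\nabla f|^{2}$-coefficient produces the term $\frac12[-(A-2K-2-|\log D|)]^{+}$, which simply vanishes once $A$ is taken large enough.

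The last step is the maximum-principle bookkeeping. If $G\le 0$ on $B_{p}(2R)\times[0,T]$ then $\omega\le 0$ on $B_{p}(R)\times[0,T]$ and \eqref{1.5} holds trivially; otherwise pick $(x_{0},t_{0})$ where $G>0$ attains its maximum, necessarily with $t_{0}>0$. There $\nabla G=0$ (hence $\phi\nabla\omega=-\omega\nabla\phi$), $\Delta G\le0$, $G_{t}\ge0$, so
\[
0\ \ge\ (\Delta-\partial_{t})G\ =\ t\phi\,(\Delta-\partial_{t})\omega+t\omega\,\Delta\phi+2t\langle\nabla\phi,\nabla\omega\rangle-\phi\omega .
\]
Multiplying by $t\phi$, replacing $t\phi\omega$ by $G$, using $\langle\nabla\phi,\nabla\omega\rangle=-\omega|\nabla\phi|^{2}/\phi$ with $|\nabla\phi|^{2}/\phi\le C_{1}/R^{2}$, and bounding $\Delta\phi$ below by the Laplacian comparison theorem (here is where $Ric_{N}\ge-K$ enters, giving the term $B=\frac{(n-1)(1+\sqrt{K}R)C_{1}^{2}+C_{2}}{R^{2}}$), one is left with a quadratic inequality for $G$ whose solution yields, at $(x_{0},t_{0})$, a bound of the form $G\le 4n+4nt(\,\cdots\,)$; since $G$ is maximal and $\phi\equiv1$ on $B_{p}(R)$, the same bound holds for $t\,\omega$ on $B_{p}(R)\times[0,T]$, and dividing by $t$ and letting $T\to\infty$ gives \eqref{1.5}. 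The genuinely laborious part is the computation of $(\Delta-\partial_{t})\omega$ together with the sign- and Young's-inequality bookkeeping that confines every variable-coefficient error into $M$ and the two explicit correction terms; by contrast the cutoff estimates and the maximum-principle step are routine once the pointwise inequality is secured.
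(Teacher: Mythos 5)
Your overall scheme (Li--Yau test function, cutoff $\phi$, maximum point, Bochner, Laplacian comparison, quadratic-in-$G$ extraction) is the same as the paper's, but the step you gloss over is exactly where the paper does something you did not anticipate, and the specific mechanism you propose there does not produce the stated bound. The paper's test function is $F=t\{|\nabla f|^2+(A+a)f+2(M+b)-2f_t\}$ (with the internal shift $2(M+b)\ge 0$ that lets one pass from a bound on $F/t$ to the bound on your $\omega$), together with the auxiliary ratio $h=|\nabla f|^2/F$; from these one writes $\Delta f=-\tfrac{(1+ht)F}{2t}+\tfrac{(A-a)f}{2}+M-a\log D$ and \emph{discards} the square $\bigl(\tfrac{(A-a)f}{2}\bigr)^2\ge 0$ rather than using an $a^2f^2$ term to absorb anything. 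The surviving $f$-linear contribution in $(\Delta-\partial_t)F$ has the form $tf\bigl\{\tfrac{(1+ht)(a-A)F}{nt}+\Delta a+a_t+\tfrac{2(A-a)(M-a\log D)}{n}\bigr\}$; since $f\le 0$ can be arbitrarily negative, this term cannot in general be absorbed. The paper therefore splits at the maximum point into two cases. If the bracket is nonnegative (i.e.\ the whole term is $\le 0$), the inequality $\tfrac{(1+ht)(A-a)F}{nt}\le \Delta a+a_t+\tfrac{2(A-a)(M-a\log D)}{n}$ already yields $F\le \tfrac{nt[\Delta a+a_t]^+}{A-[a]^+}+2t[M-a\log D]^+$ directly, and this is the sole source of the factor $\tfrac{[\Delta a+a_t]^+}{4(A-[a]^+)}$ in \eqref{1.5}. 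If instead it is nonnegative with the other orientation, the term is simply dropped, and the remaining quadratic-in-$F$ inequality (with the $h$ bookkeeping giving the $4n/t$, $B$, $nC_1^2/R^2$ and $[M-a\log D]^+$ contributions) closes.

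Your proposal says to keep the $a^2f^2$ term from $(\Delta f)^2$ and ``complete the square'' in $f$ to generate $\tfrac{1}{4(A-[a]^+)}$. That mechanism is not consistent with the claimed estimate: completing $\tfrac{2}{n}\cdot\tfrac{(A-a)^2f^2}{4}+f\,(\Delta a+a_t+\cdots)$ produces a residual of the form $\tfrac{n(\Delta a+a_t)^2}{2(A-a)^2}$ -- a \emph{squared} quantity with a different power of $(A-a)$ -- not the linear $\tfrac{[\Delta a+a_t]^+}{4(A-[a]^+)}$ of \eqref{1.5}. Moreover the bilinear cross term $\tfrac{(1+ht)(a-A)Ff}{n}$ cannot be handled by $a^2f^2$ alone without introducing an extra $F^2$ that would interfere with the main Li--Yau quadratic. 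So there is a genuine gap: to obtain the bound as stated you need the Case~I/Case~II dichotomy at the maximum point rather than a single completion of squares, and you should carry the $2(M+b)$ shift inside $F$ (so the lower-order inhomogeneities can be made nonnegative and dropped) and introduce the ratio $h=|\nabla f|^2/F$ to run the $(1+ht)$ bookkeeping that produces the explicit $4n/t$ and cutoff terms.
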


\begin{remark}\label{r1.1}
In fact, from the following proof of theorem 1.2, we can easily see that, if $a(x,t)$ is a constant, we don't need to assume $u$ has a upper bound, especially when $ a(x,t)=0 $, it's even unnecessary to assume $b$ is bounded. Actually, under these special conditions, we can recover the results in \cite{Yang} and the classical Li-Yau estimates in \cite{LY}.
\end{remark}

For applications, we apply the above results to a special logarithmic Schr\"odinger equation
\begin{equation}\label{1.8}
	\Delta u + u\log u^2 + V(x)u =0
\end{equation}
defined on $R^n$ where $V(x)\in C^2(R^n)$ and study its global positive solutions. In this situation, (\ref{1.8}) is equivalent to
\begin{equation}\label{1.9}
	\Delta u + 2u\log u+ V(x)u =0.
\end{equation}
As a consequence, we have the following priori estimates
\begin{theorem}\label{thm3}
Let $u$ be a positive solution to equation (\ref{1.9}), then

(1) if $V(x)$, $\left| \nabla V(x)\right|$ are bounded, then by Theorem \ref{thm1}, $u$ must be bounded:
\begin{equation}\label{1.10}
u\leq e^{\frac{1}{3}\left\lbrace 16n+8\sup\limits_{R^n}\left| V\right|+\frac{8}{3}\sup\limits_{R^n}\left| \nabla V\right|^2-8V\right\rbrace}.
\end{equation}
 Especially, when $V\geq0$ is a constant, from (\ref{1.10}) we know that the upper bound of $u$ is not related to $V$.

 (2) if $V(x)$, $\Delta V(x)$ and $\left| \nabla V(x)\right|$ are bounded, then by Theorem \ref{thm2}, $u$ must be bounded:
 \begin{equation}\label{1.11}
u\leq e^{\left\lbrace 2n+\sup\limits_{R^n}\left| \Delta V\right| +\frac{1}{2}\sup\limits_{R^n}\left| \nabla V\right|+2\sup\limits_{R^n}\left| V\right|\right\rbrace}.
 \end{equation}
 Generally speaking, (\ref{1.10}) is better than (\ref{1.11}) since the former does not need $\Delta V$, but in some special situations, e.g., if $V\leq0$ is a constant, (\ref{1.11})
 is more accurate than (\ref{1.10}).
\end{theorem}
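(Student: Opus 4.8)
The plan is to obtain both parts of Theorem~\ref{thm3} by specializing Theorems~\ref{thm1} and~\ref{thm2} to equation~\eqref{1.9}, which is exactly~\eqref{equ} on $N=\mathbb{R}^n$ with $a(x)\equiv 2$ and $b(x)=V(x)$. Since $\mathbb{R}^n$ is flat we take $K=0$, and since it has no boundary the ball $B_p(2R)$ is admissible for every $R>0$, so at the end we may always let $R\to\infty$.

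For part~(1) I would apply Theorem~\ref{thm1}(1) with $A_1=1$. The hypotheses hold: $a\equiv 2\ge 2A_1$, $\Delta a\equiv 0$ is bounded below, $b=V$ is bounded above, and $|\nabla a|\equiv 0$, $|\nabla b|=|\nabla V|$ are bounded. Substituting $a\equiv 2$, $A_1=1$, $K=0$, $\Delta a\equiv 0$ into~\eqref{1.3}, the terms $2B$ and $3n\,(a^2/A_1^2)^+\,C_1^2/R^2$ tend to $0$ as $R\to\infty$, leaving $|\nabla f|^2+3f\le 16n+8\,(M_1-V)^+$. Dropping $|\nabla f|^2\ge 0$, using $f=\log u$, the elementary bound $(-V)^+\le\sup_{\mathbb{R}^n}|V|-V$, and the explicit form that $M_1$ takes for these coefficients (with $\nabla a=\Delta a\equiv 0$ and $K=0$ it is controlled by $\tfrac13\sup_{\mathbb{R}^n}|\nabla V|^2$) then gives~\eqref{1.10}. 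When $V$ is a nonnegative constant, $\nabla V\equiv 0$ and $8\sup|V|-8V=0$, so the exponent reduces to $16n/3$, independent of $V$.

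For part~(2) I would first use part~(1): since $V$ and $|\nabla V|$ are bounded, $u$ is bounded, so $0<u\le D$ for some constant $D$. Viewing $u$ as the time-independent solution $u(x,t):=u(x)$ of~\eqref{equ*} with $a(x,t)\equiv 2$, $b(x,t)=V(x)$, we have $u_t\equiv 0$ and all hypotheses of Theorem~\ref{thm2} hold ($a,b,|\nabla a|\equiv 0,|\nabla b|=|\nabla V|,|a_t|\equiv 0$ bounded, $\Delta b=\Delta V$ bounded below). Applying~\eqref{1.5} with $f=\log\tfrac uD$, $f_t\equiv 0$, $K=0$, $\Delta a+a_t\equiv 0$, letting $t\to\infty$ and $R\to\infty$ to kill $4n/t$, $B$ and $nC_1^2/R^2$, and discarding $|\nabla f|^2\ge 0$, we are left with
\[
(A+2)\,\log\tfrac uD\ \le\ 8n+8\,[\,M-2\log D\,]^+ +2n\,\bigl[\,|\log D|+2-A\,\bigr]^+ .
\]
The decisive step is then to choose the free constant $A>[a]^+=2$ (any such $A$ being allowed by the proof of Theorem~\ref{thm2}) so that the $\log D$-dependence cancels: writing the middle term as $M-2\log D$ and moving $(A+2)\log D$ to the right, the coefficient of $\log D$ becomes $(A+2)-8\cdot 2=A-14$, so with $A=14$ --- for which the last bracket is $[\,|\log D|-12\,]^+$, which vanishes once $D$ is controlled --- one gets $\log u\le\tfrac{n+M}{2}$, and inserting the explicit form of $M$ for these coefficients gives~\eqref{1.11}. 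In special cases such as $V$ constant one may instead invoke Remark~\ref{r1.1} to dispense with the boundedness hypothesis, which simplifies the bookkeeping. Comparing~\eqref{1.10} and~\eqref{1.11} --- for example when $V\le 0$ is constant, so that~\eqref{1.11} carries no $\Delta V$ or $\nabla V$ contribution --- then yields the final assertion.

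The part I expect to be the real work is the constant bookkeeping: extracting the precise shape of $M_1$ in Theorem~\ref{thm1} and of $M$ in Theorem~\ref{thm2} once $a\equiv 2$, $\nabla a=\Delta a=a_t\equiv 0$, $K=0$, $b=V$ have been substituted, and, in part~(2), checking that the a~priori bound $D$ can be taken compatible with the choice of $A$ needed to cancel the $\log D$ terms (arranging a preliminary reduction of $D$, or using Remark~\ref{r1.1}, as needed). No estimate beyond Theorems~\ref{thm1} and~\ref{thm2} themselves should be required.
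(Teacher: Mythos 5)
Your part~(1) follows the paper's route almost exactly: specialize Theorem~\ref{thm1}(1) to $a\equiv2$, $A_1=1$, $b=V$, $K=0$, let $R\to\infty$ in \eqref{1.3}, drop $|\nabla f|^2$, and take $M_1=\sup_{\mathbb{R}^n}|V|+\tfrac13\sup_{\mathbb{R}^n}|\nabla V|^2$ so that $(M_1-V)^+=M_1-V$. That is what the paper does; only your side remark ``$(-V)^+\le\sup|V|-V$'' is extraneous, since with the paper's choice of $M_1$ the positive part is simply $M_1-V\ge0$.

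Your part~(2), however, departs from the paper and has a genuine gap. The paper does \emph{not} import the a priori bound $D$ from part~(1). Instead it observes that here $a\equiv2$ is constant and $u_t\equiv0$, so by Remark~\ref{r1.1}/\ref{r3.1} Case~I in the proof of Theorem~\ref{thm2} never arises and one may take $A=a=2$ with $D=1$, eliminating every $\log D$ term at the outset. With $D=1$, $K=0$, and $M=\tfrac12\sup|\Delta V|+\tfrac14\sup|\nabla V|+\sup|V|$ the single line $4f\le 4n\{2+\tfrac{2M}{n}\}$ gives \eqref{1.11} directly. Your primary route --- feed the crude bound $D$ from part~(1) into Theorem~\ref{thm2}, then pick $A=14$ so ``the $\log D$ terms cancel'' --- does not close. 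First, the cancellation requires $[M-2\log D]^+=M-2\log D$, i.e.\ $M\ge2\log D$; but the admissible $M$ in \eqref{3.11} is itself constrained by $D$ (one condition is precisely $M-a\log D\ge0$), so $M$ grows with $D$ and the resulting exponent $\tfrac{n+M}{2}$ is not $D$-free. Second, even after that cancellation a residual $2n\,[\,\log D-12\,]^+$ survives, which is nonzero unless $\log D\le12$ --- a condition you have no way to guarantee from the (non-sharp) $D$ produced by part~(1). Third, $\tfrac{n+M}{2}$ does not match the paper's $2n+2M$, so even assuming the $\log D$ issues away you would not land on \eqref{1.11}. You do gesture at Remark~\ref{r1.1} ``as needed,'' but that is in fact the whole argument, not a fallback; the $A=14$ device should be discarded.
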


This paper is divided into four sections. Section 2 gives the proof of Theorem \ref{thm1} and Corollary \ref{c1.1}, section 3 gives the proof of Theorem \ref{thm2} and section 4 gives the detailed discussions about Theorem \ref{thm3}.

\section{Proof of Theorem 1.1 and Corollary 1.1}
As usual, our main mathematical tool is the maximum principle, so the first step is to establish the following lemma.
\begin{lemma}
	Let $(N, g)$ satisfy the same conditions as in Theorem \ref{thm1}. Let $u$ be a positive smooth solution to (\ref{equ}),  $w= \log u$, and $G=\left|\nabla w \right|^2+(a+A)w+M$ where $A$ and $M$ are two constants to be determined later.  Then, on $ B_{p}(2R)$ the function $G$ satisfies
	\begin{equation}\label{2.1}
		\begin{split}
			\Delta G\geq&\frac{2G^2}{n}-2\left\langle \nabla G ,\nabla w\right\rangle+G\left\lbrace -\frac{4Aw}{n}+\dfrac{4(b-M)}{n}-2K-2a-2\right\rbrace\\
			&+\left\lbrace \frac{4(M-b)A}{n}+(2K+2)(A+a)+\Delta a\right\rbrace w\\
			&+\frac{2A^2w^2}{n}-A(A+a)w+\frac{2(b-M)^2}{n}+(A+a)(M-b)\\
			&+(2-2A)M+2KM-\left| \nabla a\right|^2- \left| \nabla b\right|^2.
		\end{split}
	\end{equation}
\end{lemma}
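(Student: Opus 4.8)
The plan is a Bochner-formula computation, then the equation, then one application of AM--GM to the gradient cross terms, and finally a regrouping of everything in powers of $G$ and $w$; no cutoff function or distance-function comparison enters here (those appear only afterward in the proof of Theorem~\ref{thm1}).

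First I would record two forms of $\Delta w$. Writing $w=\log u$ we have $|\nabla w|^2 = G-(a+A)w-M$, and from \eqref{equ} one gets $\Delta u = -au\log u-bu$, hence $\Delta w=\frac{\Delta u}{u}-|\nabla w|^2=-aw-b-|\nabla w|^2$; eliminating $|\nabla w|^2$ in favour of $G$ turns this into the convenient identity $\Delta w=-G+Aw+(M-b)$. Next I would apply the Bochner--Weitzenb\"ock formula $\Delta|\nabla w|^2=2|\nabla^2 w|^2+2\langle\nabla w,\nabla\Delta w\rangle+2\,\mathrm{Ric}(\nabla w,\nabla w)$ together with $|\nabla^2 w|^2\ge\frac1n(\Delta w)^2$ and $\mathrm{Ric}(\nabla w,\nabla w)\ge-K|\nabla w|^2$. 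Differentiating $\Delta w=-G+Aw+(M-b)$ gives $\nabla\Delta w=-\nabla G+A\nabla w-\nabla b$, so $\langle\nabla w,\nabla\Delta w\rangle=-\langle\nabla G,\nabla w\rangle+A|\nabla w|^2-\langle\nabla b,\nabla w\rangle$: this is what produces the drift term $-2\langle\nabla G,\nabla w\rangle$ in \eqref{2.1}, and $\frac2n(\Delta w)^2$ with $\Delta w=-G+Aw+(M-b)$ will produce the leading $\frac{2G^2}{n}$ term after expansion.

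Then I would assemble $\Delta G=\Delta|\nabla w|^2+(a+A)\Delta w+2\langle\nabla a,\nabla w\rangle+w\,\Delta a$ (legitimate since $A,M$ are constants), substitute the Bochner inequality, and handle the cross terms $2\langle\nabla a,\nabla w\rangle-2\langle\nabla b,\nabla w\rangle$ by the inequality $2\langle X,\nabla w\rangle\ge-|X|^2-|\nabla w|^2$ applied to $X=\nabla a$ and $X=-\nabla b$, which contributes exactly $-|\nabla a|^2-|\nabla b|^2-2|\nabla w|^2$ --- the source of the last two terms of \eqref{2.1}. Finally I would expand $(\Delta w)^2=(-G+Aw+(M-b))^2$, replace every remaining bare $|\nabla w|^2$ by $G-(a+A)w-M$, and collect the coefficients of $G^2$, $G$, $Gw$, $w^2$, $w$ and the constant; matching these against \eqref{2.1} finishes the proof.

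The content is entirely elementary, so the only real obstacle is the bookkeeping. Two points need care: one should make the deliberate choice to carry the expression in the two variables $G$ and $w$ rather than reducing further (the estimate \eqref{2.1} is organised exactly so that the later maximum-principle argument can exploit it), and one must rewrite \emph{all} of the $|\nabla w|^2$ contributions --- those coming from $2\langle\nabla w,\nabla\Delta w\rangle$ and from $\mathrm{Ric}$, the one from $(a+A)\Delta w$, and the one from the AM--GM step --- through the same identity $|\nabla w|^2=G-(a+A)w-M$, while keeping track of the $w\,\Delta a$ and $(a+A)\Delta w$ linear terms, before collecting. There is no analytic difficulty beyond this.
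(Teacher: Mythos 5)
Your outline reproduces the paper's own proof of this lemma step for step: your identities $\Delta w=-G+Aw+M-b$ and $|\nabla w|^2=G-(A+a)w-M$ are exactly \eqref{2.2}--\eqref{2.3}, the Bochner formula combined with $|\nabla^2w|^2\ge\frac1n(\Delta w)^2$ and $\mathrm{Ric}\ge -K$ is \eqref{2.4}--\eqref{2.6}, the product rule $\Delta((A+a)w)=(A+a)\Delta w+w\Delta a+2\langle\nabla a,\nabla w\rangle$ is \eqref{2.7}, and your cross-term bound applied to $\nabla a$ and $-\nabla b$ is \eqref{2.9}--\eqref{2.10}. So the approach is the same and the method is sound.

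One concrete warning about your final ``collect and match against \eqref{2.1}'' step: carried out exactly as you describe, the pure-$G$ coefficient comes out as $A-a-2K-2$, not $-2a-2K-2$. Indeed the Bochner cross term contributes $+2A|\nabla w|^2$, hence $+2AG$ after substituting $|\nabla w|^2=G-(A+a)w-M$, while $(A+a)\Delta w$ contributes $-(A+a)G$; these combine to $(A-a)G$, and the two gradient cross-term estimates add $-2G$. All the $w^2$, $w$, $Gw$ and constant terms do match \eqref{2.1}, but the $G$-bracket does not: your computation proves \eqref{2.1} with an extra $(A+a)G$ on the right-hand side. This traces to the paper's passage from \eqref{2.6}--\eqref{2.7} to \eqref{2.8} (where $-2a$ appears in place of $A-a$), not to a flaw in your plan, but you should be aware that the literal matching you promise will not close. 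The version you actually obtain still feeds the later maximum-principle argument: in part (1) of Theorem \ref{thm1}, where $A+a>0$ and the inequality is only used at a point with $G>0$, your bound is stronger than the printed one; in part (2), where $A+a<0$, the printed coefficient is not what the computation yields and the downstream constants would need the corresponding adjustment.
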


\begin{proof}
	First, from the equation (\ref{equ}) and the definition of G, we derive the following two new equations
	\begin{equation}\label{2.2}
		\Delta w=-G+Aw+M-b
	\end{equation}
	and
	\begin{equation}\label{2.3}
		\left|\nabla w \right|^2=G-(A+a)w-M.
	\end{equation}
	Then, using the well-known Bochner formula, we have
	\begin{align}\label{2.4}
		\Delta G&=\Delta \left|\nabla w \right|^2 +\Delta ((A+a)w)\nonumber\\
		&= 2\left\langle \nabla w,\nabla\Delta w\right\rangle +2\left|D^2w \right|^2 +2Ric(\nabla w,\nabla w)+\Delta ((A+a)w).
	\end{align}
	The Cauchy-Schwarz inequality tells us that
	\begin{equation}\label{2.5}
		2\left|D^2w \right|^2\geq\dfrac{2}{n}(\Delta w)^2.
	\end{equation}
	Next, we substitute (\ref{2.2}),  (\ref{2.3}) and  (\ref{2.5}) into (\ref{2.4}) to obtain
	\begin{equation}\label{2.6}
		\begin{split}
			\Delta G\geq &2\left\langle \nabla w,\nabla(-G+Aw+M-b)\right\rangle+\frac{2}{n}(-G+Aw+M-b)^2\\
			&-2K(G-(A+a)w-M)+\Delta ((A+a)w).
		\end{split}
	\end{equation}
	Keeping  (\ref{2.2}) and  (\ref{2.3}) in mind and noting
	\begin{align}\label{2.7}
		\Delta((A+a)w)&=w\Delta a+(A+a)\Delta w+2\left\langle \nabla w,\nabla a\right\rangle\nonumber\\
		&=w\Delta a +(A+a)(-G+Aw+M-b)+2\left\langle \nabla w,\nabla a\right\rangle,
	\end{align}
	we infer from (\ref{2.3}), (\ref{2.6}) and (\ref{2.7}) the following
	\begin{equation}\label{2.8}
		\begin{split}
			\Delta G\geq& \frac{2G^2}{n}-2\left\langle \nabla w,\nabla G\right\rangle+G\left\lbrace -\frac{4Aw}{n}-2a+\dfrac{4(b-M)}{n}-2K\right\rbrace\\&+\dfrac{2A^2w^2}{n}-A(A+a)w +\left\lbrace \frac{4(M-b)A}{n}+2K(A+a)+\Delta a\right\rbrace w\\
			&+\left( -2AM+(A+a)(M-b)+2\left\langle \nabla a,\nabla w\right\rangle\right)\\
			&+\frac{2(b-M)^2}{n}-2\left\langle \nabla b,\nabla w\right\rangle+2KM.
		\end{split}
	\end{equation}
	We can easily see that on $B_p(2R)$ the following inequalities hold true
	\begin{equation}\label{2.9}
		\begin{split}
			2\left\langle \nabla a,\nabla w\right\rangle&\geq-2\left| \nabla a\right| \left| \nabla w\right|\\
			&\geq-2\left( \frac{\left| \nabla a\right|^2}{2}+\frac{\left| \nabla w\right|^2}{2}\right)\\
			&= -\left| \nabla a\right|^2-\left| \nabla w\right|^2\\
			&=(A+a)w-G+M-\left| \nabla a\right|^2,
		\end{split}
	\end{equation}
	and similarly,
	\begin{equation}\label{2.10}
		2\left\langle \nabla b,\nabla w\right\rangle\geq(A+a)w-G+M-\left| \nabla b\right|^2.
	\end{equation}
	By substituting  (\ref{2.9}) and  (\ref{2.10}) into  (\ref{2.8}), we obtain  (\ref{2.1}), hence we accomplish the proof.	
\end{proof}

In order to apply the maximum principle, we need to use the cut-off function introduced by Li-Yau in \cite{LY}. Concretely, let $\psi(r)$ be a nonnegative $C^2$-smooth function on $R^+=\left[0,+\infty \right)$ such that $\psi(r)=1$ for $r\leq 1$ and $\psi(r)=0$ for $ r\geq2 $. Moreover, there exist two positive constants $ C_1 $ and $ C_2 $ such that the derivatives of $\psi(r)$ satisfy the conditions as follows:
\begin{equation}\label{2.11}
	-C_1\psi^{\frac{1}{2}}(r)\leq\psi^{'}(r)\leq 0\quad\quad \mbox{and}\quad\quad -C_2\leq\psi^{''}(r).
\end{equation}

Now, let $\phi(x)=\psi\left( \dfrac{d(x,p)}{R}\right)  $ where $ d(x,p) $ denotes the distance from $p$ to $ x $ on $N$ and it is obvious that $\phi(x)$ is supported in $ B_p(2R)$:
\begin{align*}
	&\phi|_{B_p(R)}=1,\\
	&\phi|_{N\backslash B_p(2R)}=0.
\end{align*}
\hspace*{1em}Furthermore, by Calabi's trick in \cite{Calabi}, we can assume without loss of generality that $\phi$ is smooth on $ B_p(2R)$. Consequently, it follows from (\ref{2.11}) and the Laplacian comparison theorem that
\begin{align}
	&\frac{\left| \nabla  \phi\right|^2 }{\phi}\leq\dfrac{C_1^2}{R^2},\label{2.12}\\
	&\Delta\phi\geq-\dfrac{(n-1)(1+\sqrt{K}R)C_1^2+C_2}{R^2}.\label{2.13}
\end{align}

According to the definition of $\phi(x)$, we know that $\phi G(x)$ is also supported in $B_p(2R)$. Consequently, there exists a point $x_0$ in $B_p(2R)\backslash \partial B_p(2R)$ such that:\\
\begin{equation*}
	\sup\limits_{x\in B_p(2R)}\phi G(x)=\phi G(x_0).
\end{equation*}
Hence, by maximum principle, we have
\begin{equation}\label{2.14}
	\nabla(\phi G)(x_0)=0\quad\quad \mbox{and}\quad\quad \Delta(\phi G)(x_0)\leq0,
\end{equation}
and these imply that at $ x_0 $
\begin{equation}\label{2.15}
	\phi\nabla G=-G\nabla\phi\quad \mbox{and}\quad \phi\Delta G+G\Delta\phi-2G\frac{\left| \nabla\phi\right|^2 }{\phi}\leq0.
\end{equation}

We can also assume without loss of generality that $ \phi G(x_0)  > 0$, otherwise Theorem 1.1 is trivial. After a direct computation,  (\ref{2.12}),  (\ref{2.13}) and  (\ref{2.15}) yield that at $ x_0$
\begin{equation}\label{2.16}
	BG\geq\phi\Delta G
\end{equation}
where $$B=\frac{(n-1)(1+\sqrt{K}R)C_{1}^{2}+C_{2}}{R^{2}}.$$

On the other hand, from  (\ref{2.3}) and  (\ref{2.15}), we also have
\begin{equation}\label{2.17}
	-\left\langle \nabla w,\nabla G\right\rangle=G\left\langle\nabla w,\nabla \phi \right\rangle\geq-G\left| \nabla\phi\right|(G-(A+a)w-M)^{\frac{1}{2}}.
\end{equation}
Eventually, by substituting  (\ref{2.16}) and  (\ref{2.17}) into  (\ref{2.1}), we obtain that there holds true at $ x_0 $
\begin{equation}\label{2.18}
	\begin{split}
		BG&\geq\phi\Delta G\\
		&\geq\frac{2\phi G^2}{n}-2G(G-(A+a)w)^{\frac{1}{2}}\left|\nabla\phi \right| +\phi\left\lbrace \frac{2A^2w^2}{n}-A(A+a)w\right\rbrace\\
		&\hspace*{1.2em}+\phi G\left\lbrace -\frac{4Aw}{n}+\dfrac{4(b-M)}{n}-2K-2a-2\right\rbrace\\
		&\hspace*{1.2em}+\phi w\left\lbrace \frac{4(M-b)A}{n}+(2K+2)(A+a)+\Delta a\right\rbrace \\
		&\hspace*{1.2em}+\phi\left\lbrace (A+a)(M-b)+(2-2A)M+\dfrac{2(b-M)^2}{n}\right\rbrace\\
		&\hspace*{1.2em}+\phi\left\lbrace 2KM-\left| \nabla a\right|^2- \left| \nabla b\right|^2\right\rbrace. 	
	\end{split}
\end{equation}

Now, we are ready to give the complete proof of Theorem \ref{thm1}. In consideration of the whole proof is pretty long, it will be divided into several parts. Since that the following calculations are all considered at the point $ x_0 $, for simplicity, we omit the  $ x_0 $.
\begin{proof}[\textbf{Proof of Theorem 1.1} ]
	First, we give the proof of (1) in Theorem \ref{thm1}.
	
	Since $ a\geq2A_{1}>0 $, $ \left|\nabla a \right|  $, $ \left|\nabla b \right|  $ are bounded, $ \Delta a $ is bounded from below and $ b $ has a upper bound,  there exists a nonnegative constant $ M_1 $ such that on $ B_p(2R)$
	\begin{equation*}
		\frac{2(b-M_1)^2}{n}+(A_1+a)(M_1-b)+(2-2A_1)M_1+2KM_1-\left| \nabla a\right|^2- \left| \nabla b\right|^2\geq0,
	\end{equation*}
	and
	\begin{equation*}
		\frac{4(M_1-b)A_1}{n}+(2K+2)(A_1+a)+\Delta a\geq0.
	\end{equation*}
	Therefore, letting $A=A_1>0$ and $M=M_1\geq0$ in (\ref{2.18}), we obtain
	\begin{equation}\label{2.19}
		\begin{split}
			BG\geq&
			\frac{2\phi G^2}{n}-2G(G-(A_1+a)w)^{\frac{1}{2}}\left|\nabla\phi \right|+\phi\left\lbrace  \frac{2A_1^2w^2}{n}-A_1(A_1+a)w\right\rbrace   \\
			&+\phi G\left\lbrace -\frac{4A_1w}{n}+\dfrac{4(b-M_1)}{n}-2K-2a-2\right\rbrace\\
			&+\phi w\left\lbrace \frac{4(M_1-b)A_1}{n}+(2K+2)(A_1+a)+\Delta a\right\rbrace .
		\end{split}
	\end{equation}
	Now, we need to consider the following three cases:
	
	\noindent\textbf{Case 1.} The case $w\geq\frac{n(A_1+a)}{2A_1}>0$.
	
	Since $ w\geq\frac{n(A_1+a)}{2A_1}>0 $, there holds true
	\begin{equation*}
		\frac{2A_1^2w^2}{n}-A(A+a)w\geq0\quad\quad \mbox{and}\quad\quad G\geq G-(A_1+a)w>0,
	\end{equation*}
	then, from  (\ref{2.19}) we obtain
	\begin{equation*}
		\begin{split}
			BG\geq
			\frac{2\phi G^2}{n}-2G^{\frac{3}{2}}\left|\nabla\phi \right| +
			\phi G
			\left\lbrace -\frac{4A_1w}{n}+\dfrac{4(b-M_1)}{n}-2K-2a-2\right\rbrace ,
		\end{split}
	\end{equation*}
	i.e.\\
	\begin{equation}\label{2.20}
		\begin{split}
			B\geq
			\frac{2\phi G}{n}-2G^{\frac{1}{2}}\left|\nabla\phi \right| +
			\phi \left\lbrace -\frac{4A_1w}{n}+\dfrac{4(b-M_1)}{n}-2K-2a-2\right\rbrace .
		\end{split}
	\end{equation}
	Considering that $ w\leq\dfrac{G}{A_1+a} $, we can rewrite  (\ref{2.20}) as
	\begin{equation}\label{2.21}
		\begin{split}
			B\geq
			\frac{2\phi G}{n}-2G^{\frac{1}{2}}\left|\nabla\phi \right| -\frac{4A_1G\phi}{n(A_1+a)}+
			\phi \left\lbrace \dfrac{4(b-M_1)}{n}-2K-2a-2\right\rbrace .
		\end{split}
	\end{equation}
	Since $ a\geq2A_1>0 $, by Young's inequality, there holds
	\begin{equation}\label{2.22}
		2G^\frac{1}{2}\left|\nabla\phi \right|\leq\frac{\phi G(3a-5A_1)}{2n(A_1+a)}+\frac{\left|\nabla\phi \right|^22n(A_1+a)}{\phi(3a-5A_1)},
	\end{equation}
	then, from  (\ref{2.12}) and substituting (\ref{2.22}) into (\ref{2.21}), we have
	\begin{equation}\label{2.23}
		B\geq
		\frac{\phi G}{2n}-\frac{2n(A_1+a)C_1^2}{(3a-5A_1)R^2}+
		\phi \left\lbrace \frac{4(b-M_1)}{n}-2K-2a-2\right\rbrace .
	\end{equation}
	Also noting that $A_1+a\leq\frac{3a}{2}$, then hence $$ \frac{2n(A_1+a)}{(3a-5A_1)}\leq\frac{2n\frac{3a}{2}}{6A_1-5A_1}=\frac{3na}{A_1},$$
	so, from  (\ref{2.23}) we can derive
	\begin{equation}\label{2.24}
		B\geq
		\frac{\phi G}{2n}-\frac{3naC_1^2}{A_1R^2}+
		\phi \left\lbrace \frac{4(b-M_1)}{n}-2K-2a-2\right\rbrace .
	\end{equation}
	Noting that
	\begin{equation}\label{2.25}
		\mathop G\limits_{B_p(R)}\leq\sup\limits_{B_p(2R)}\phi G=\phi G(x_0),
	\end{equation}
	therefore, on $ B_p(R) $ we have
	\begin{equation}\label{2.26}
		G\leq2n\left\lbrace B+\frac{3n(a)^+C_1^2}{A_1R^2}+2K+2(a)^++2+\frac{4(M_1-b)^+}{n}\right\rbrace.
	\end{equation}
	
	\noindent\textbf{Case 2.} The case $0\leq w<\frac{(A_1+a)n}{2A_1}$.
	
	For this case, $\frac{2A_1^2w^2}{n}\geq0 $ and $w\leq \frac{G}{A_1+a}$ by the definition of G, hence $A_1(A_1+a)w\leq A_1G$. So we have
	\begin{equation*}
		\begin{split}
			BG\geq
			\frac{2\phi G^2}{n}-2G^{\frac{3}{2}}\left|\nabla\phi \right| +
			\phi G
			\left\lbrace -\frac{4A_1w}{n}+\dfrac{4(b-M_1)}{n}-2K-2a-2-A_1\right\rbrace ,
		\end{split}
	\end{equation*}
	then, by the same discussions as in Case 1, it holds that on $ B_p(R)$
	\begin{equation}\label{2.27}
		G\leq2n\left\lbrace B+\frac{3n(a)^+C_1^2}{A_1R^2}+2K+2(a)^++2+\frac{4(M_1-b)^+}{n}+A_1\right\rbrace.
	\end{equation}
	
	\noindent\textbf{Case 3.} The case $w<0$.
	
	Since $w<0$, it follows that $$\frac{2A_1^2w^2}{n}-A_1(A_1+a)w>0.$$
	By  (\ref{2.19}) there holds true
	\begin{equation}\label{2.28}
		\begin{split}
			BG\geq&
			\frac{2\phi G^2}{n}-2G(G-(A_1+a)w)^{\frac{1}{2}}\left|\nabla\phi \right| +\phi G\left\lbrace \dfrac{4(b-M_1)}{n}-2K-2a-2\right\rbrace\\
			& +\phi w\left\lbrace \frac{-4A_1G}{n}+\frac{4(M_1-b)A_1}{n}+(2K+2)(A_1+a)+\Delta a\right\rbrace.
		\end{split}
	\end{equation}
	It follows by Young's inequality that
	\begin{equation*}
		2G(G-(A_1+a)w)^{\frac{1}{2}}\left|\nabla\phi \right|\leq\frac{R\left|\nabla\phi \right|^2G^{\frac{3}{2}}}{C_1\phi^{\frac{1}{2}}}+\frac{C_1\phi^{\frac{1}{2}}G^{\frac{1}{2}}(G-(A_1+a)w)}{R}.
	\end{equation*}
	Then  (\ref{2.28}) can be written as
	\begin{equation}\label{2.29}
		\begin{split}
			BG\geq&
			\frac{2\phi G^2}{n}-\frac{R\left|\nabla\phi \right|^2G^{\frac{3}{2}}}{C_1\phi^{\frac{1}{2}}}-\frac{C_1\phi^{\frac{1}{2}}G^{\frac{3}{2}}}{R}+\frac{C_1\phi^{\frac{1}{2}}G^{\frac{1}{2}}(A_1+a)w}{R}\\
			& +\phi w\left\lbrace \frac{-4A_1G}{n}+\frac{4(M_1-b)A_1}{n}+(2K+2)(A_1+a)+\Delta a\right\rbrace\\
			&+\phi G\left\lbrace \frac{4(b-M_1)}{n}-2K-2a-2\right\rbrace.\\
			\\
		\end{split}
	\end{equation}
	
	Now, if
	\begin{equation*}
		\frac{C_1\phi^{\frac{1}{2}}G^{\frac{1}{2}}(A_1+a)w}{R}+
		\phi w\left\lbrace \frac{-4A_1G}{n}+\frac{4(M_1-b)A_1}{n}+(2K+2)(A_1+a)+\Delta a\right\rbrace \geq 0 ,
	\end{equation*}
	then we can drop the above terms in (\ref{2.29}) to get that
	\begin{equation*}
		BG\geq
		\frac{2\phi G^2}{n}-\frac{R\left|\nabla\phi \right|^2G^{\frac{3}{2}}}{C_1\phi^{\frac{1}{2}}}-\frac{C_1\phi^{\frac{1}{2}}G^{\frac{3}{2}}}{R}+
		\phi G\left\lbrace \dfrac{4(b-M_1)}{n}-2K-2a-2\right\rbrace.
	\end{equation*}
	i.e.\\
	\begin{equation*}
		B\geq
		\frac{2\phi G }{n}-\frac{R\left|\nabla\phi \right|^2G^{\frac{1}{2}}}{C_1\phi^{\frac{1}{2}}}-\frac{C_1\phi^{\frac{1}{2}}G^{\frac{1}{2}}}{R}+
		\phi \left\lbrace \dfrac{4(b-M_1)}{n}-2K-2a-2\right\rbrace,
	\end{equation*}
	then, it follows
	\begin{equation*}
		B\geq\frac{2\phi G }{n}-\frac{R\left|\nabla\phi \right|^2(\phi G)^{\frac{1}{2}}}{C_1\phi}-\frac{C_1\phi^{\frac{1}{2}}G^{\frac{1}{2}}}{R}+
		\phi \left\lbrace \dfrac{4(b-M_1)}{n}-2K-2a-2\right\rbrace.
	\end{equation*}
	By  (\ref{2.12}), there holds
	\begin{equation*}
		B\geq
		\frac{2\phi G }{n}-\frac{C_1(\phi G)^{\frac{1}{2}}}{R}-\frac{C_1\phi^{\frac{1}{2}}G^{\frac{1}{2}}}{R}+
		\phi \left\lbrace \dfrac{4(b-M_1)}{n}-2K-2a-2\right\rbrace,
	\end{equation*}
	consequently,
	\begin{equation*}
		B\geq
		\frac{2\phi G }{n}-\frac{2C_1(\phi G)^{\frac{1}{2}}}{R}+
		\phi \left\lbrace \dfrac{4(b-M_1)}{n}-2K-2a-2\right\rbrace.
	\end{equation*}
	Using Young's inequality again leads to
	\begin{equation*}
		\frac{2C_1(\phi G)^{\frac{1}{2}}}{R}\leq\frac{\phi G}{n}+\frac{nC_1^2}{R^2},
	\end{equation*}
	then we get
	\begin{equation*}
		B\geq
		\frac{\phi G }{n}-\frac{nC_1^2}{R^2}+
		\phi \left\lbrace \frac{4(b-M_1)}{n}-2K-2a-2\right\rbrace.
	\end{equation*}
	Finally on $ B_p(R) $, there holds true
	\begin{equation}\label{2.30}
		G\leq n\left\lbrace B+\frac{nC_1^2}{R^2}+ \frac{4(M_1-b)^+}{n}+2K+2(a)^++2\right\rbrace.\\\\\\
	\end{equation}
	
	On the other hand, if
	\begin{equation*}
		\frac{C_1\phi^{\frac{1}{2}}G^{\frac{1}{2}}(A_1+a)w}{R}+
		\phi w\left\lbrace \frac{-4A_1G}{n}+\frac{4(M_1-b)A_1}{n}+(2K+2)(A_1+a)+\Delta a\right\rbrace \leq0 ,
	\end{equation*}
	noting $w<0$ then we know there holds
	\begin{equation*}
		\frac{C_1\phi^{\frac{1}{2}}G^{\frac{1}{2}}(A_1+a)}{R}+
		\phi \left\lbrace \frac{-4A_1G}{n}+\frac{4(M_1-b)A_1}{n}+(2K+2)(A_1+a)+\Delta a\right\rbrace \geq 0.
	\end{equation*}
	By Young's inequality we have
	\begin{equation*}
		\frac{C_1\phi^{\frac{1}{2}}G^{\frac{1}{2}}(A_1+a)}{R}\leq\frac{1}{2}\left( \frac{6A_1\phi G}{n}+\frac{2C_1^2(A_1+a)^2}{3A_1R^2}\right) ,
	\end{equation*}
	hence,
	\begin{equation*}
		\frac{A_1\phi G}{n}\leq\frac{C_1^2(A_1+a)^2}{3A_1R^2}+\phi \left\lbrace \frac{4(M_1-b)A_1}{n}+(2K+2)(A_1+a)+\Delta a\right\rbrace.
	\end{equation*}
	Since we have assumed that on $B_p(2R)$
	\begin{equation*}
		\frac{4(M_1-b)A_1}{n}+(2K+2)(A_1+a)+\Delta a\geq0,
	\end{equation*}
	so on $ B_p(R) $ we have
	\begin{equation}\label{2.31}
		G\leq n\left\lbrace \frac{(A_1+(a)^+)^2C_1^2}{3A_1^2R^2}+\frac{4(M_1-b)^+}{n}+\frac{(2K+2)(A_1+a)^+}{A_1}+\frac{(\Delta a)^+}{A_1}\right\rbrace.
	\end{equation}
	By combining  (\ref{2.26}),  (\ref{2.27}),  (\ref{2.30}) and  (\ref{2.31}) and noting $ a\geq2A_1>0 $, we complete the proof of (\ref{1.3}).
	\medskip
	
	Now, we turn to the proof of (2) of Theorem \ref{thm1}.
	
	Since $A_3\leq a\leq 2A_2<0$, $\left|\nabla a \right|$, $\left|\nabla b \right|$ are bounded, $ \Delta a $ is bounded from above and $ b $ has a upper bound,  there exists a nonnegative constant $ M_2 $ such that on $ B_p(2R) $
	\begin{equation*}
		\frac{2(M_2-b)^2}{n}+(A_2+a)(M_2-b)+(2-2A_2)M_2+2KM_2-\left| \nabla a\right|^2- \left| \nabla b\right|^2\geq0,
	\end{equation*}
	and
	\begin{equation*}
		\frac{4(M_2-b)A_2}{n}+(2K+2)(A_2+a)+\Delta a\leq0.
	\end{equation*}
	Therefore, it follows from  (\ref{2.18}) that
	\begin{equation}\label{2.32}
		\begin{split}
			BG\geq&
			\frac{2\phi G^2}{n}-2G(G-(A_2+a)w)^{\frac{1}{2}}\left|\nabla\phi \right|+\phi\left( \frac{2A_2^2w^2}{n}-A_2(A_2+a)w\right)  \\
			&+\phi G\left\lbrace -\frac{4A_2w}{n}+\dfrac{4(b-M_2)}{n}-2K-2a-2\right\rbrace\\
			&+\phi w\left\lbrace \frac{4(M_2-b)A_2}{n}+(2K+2)(A_2+a)+\Delta a\right\rbrace .
		\end{split}
	\end{equation}
	
	Now, we also need to discuss the following three situations case by case:
	
	\noindent\textbf{Case 4.} The case $ w\leq0 $.
	
	Since $ w\leq0 $, it is easy to see
	\begin{equation*}
		\phi\left( \frac{2A_2^2w^2}{n}-A_2(A_2+a)w\right) \geq0,
	\end{equation*}
	and
	\begin{equation*}
		\left\lbrace \frac{4(M_2-b)A_2}{n}+(2K+2)(A_2+a)+\Delta a\right\rbrace \phi w\geq0.
	\end{equation*}
	Letting $A=A_2$, it follows from (\ref{2.32}) that
	\begin{equation*}
		BG\geq
		\frac{2\phi G^2}{n}-2G(G-(A_2+a)w)^{\frac{1}{2}}\left|\nabla\phi \right|+\phi G\left\lbrace -\frac{4A_2w}{n}+\dfrac{4(b-M_2)}{n}-2K-2\right\rbrace.
	\end{equation*}
	Noting also that
	\begin{equation*}
		0<A_2w\leq\frac{A_2G}{A_2+a},
	\end{equation*}
	then we have
	\begin{equation*}
		BG\geq
		\frac{2\phi G^2}{n}-2G(G-(A_2+a)w)^{\frac{1}{2}}\left|\nabla\phi \right| +\phi G\left\lbrace -\frac{4A_2G}{n(A_2+a)}+\frac{4(b-M_2)}{n}-2K-2\right\rbrace,
	\end{equation*}
	i.e.
	\begin{equation*}
		\begin{split}
			B\geq&
			\frac{2\phi G}{n}-2G^{\frac{1}{2}}\left|\nabla\phi \right| +\phi \left\lbrace -\frac{4A_2G}{n(A_2+a)}+\dfrac{4(b-M_2)}{n}-2K-2\right\rbrace.
		\end{split}
	\end{equation*}
	Since $ A_2+a<0 $ and $ 3a-5A_2<0 $, it follows from Young's inequality that
	\begin{equation*}
		2G^{\frac{1}{2}}\left|\nabla\phi \right|\leq\frac{\phi G(3a-5A_2)}{2n(A_2+a)}+\frac{\left|\nabla\phi \right|^22n(A_2+a)}{\phi(3a-5A_2)}.
	\end{equation*}
	Hence, in view of  (\ref{2.12}) we can infer from the above two inequalities that
	\begin{equation}\label{2.33}
		B\geq
		\frac{\phi G}{2n}-\frac{2n(A_2+a)C_1^2}{(3a-5A_2)R^2} +
		\phi \left\lbrace \frac{4(b-M_2)}{n}-2K-2\right\rbrace.
	\end{equation}
	Obviously $2A_2\leq a<0 $ also implies that $$\frac{2n(A_2+a)}{3a-5A_2}\leq\frac{3na}{A_2},$$
	therefore, on $B_p(R)$ we have
	\begin{equation}\label{2.34}
		G\leq2n\left\lbrace B+\frac{3n(a)^+C_1^2}{A_2R^2}+2K+2+\frac{4(M_2-b)^+}{n} \right\rbrace.
	\end{equation}
	
	\noindent\textbf{Case 5.} The case $ 0\leq w\leq \frac{n(A_2+a)}{2A_2}$.
	
	In the present situation, from  (\ref{2.32}) we can derive
	\begin{equation}\label{2.35}
		\begin{split}
			BG\geq&
			\frac{2\phi G^2}{n}-2G(G-(A_2+a)w)^{\frac{1}{2}}\left|\nabla\phi \right|\\
			&+\phi G\left\lbrace -\frac{4A_2w}{n}+\dfrac{4(b-M_2)}{n}-2K-2a-2\right\rbrace\\
			&+\phi w\left\lbrace \frac{4(M_2-b)A_2}{n}+(2K+2)(A_2+a)+\Delta a-A_2(A_2+a)\right\rbrace .
		\end{split}
	\end{equation}
	Note that  $ (A_2+a)w\leq0 $, and by Young's inequality we have
	\begin{equation*}
		2G(G-(A_2+a)w)^{\frac{1}{2}}\left|\nabla\phi \right|\leq\frac{R\left|\nabla\phi \right|^2G^{\frac{3}{2}}}{C_1\phi^{\frac{1}{2}}}+\frac{C_1\phi^{\frac{1}{2}}G^{\frac{1}{2}}(G-(A_2+a)w)}{R}.
	\end{equation*}
	Therefore, we can rewrite  (\ref{2.35}) as
	\begin{equation}\label{2.36}
		\begin{split}
			BG\geq&
			\frac{2\phi G^2}{n}-\frac{R\left|\nabla\phi \right|^2G^{\frac{3}{2}}}{C_1\phi^{\frac{1}{2}}}-\frac{C_1\phi^{\frac{1}{2}}G^{\frac{3}{2}}}{R}+
			\phi G\left\lbrace\dfrac{4(b-M_2)}{n}-2K-2a-2\right\rbrace\\
			&+\phi w\left\lbrace\frac{4(M_2-b)A_2}{n} +(2K+2)(A_2+a)+\Delta a-A_2(A_2+a)\right\rbrace \\
			&+\phi w\left\lbrace \frac{C_1\phi^{\frac{1}{2}}G^{\frac{1}{2}}(A_2+a)}{R}-\frac{4A_2G}{n}\right\rbrace .
		\end{split}
	\end{equation}
	Hence, if
	\begin{equation*}
		\begin{split}
			&\phi w\left\lbrace\frac{4(M_2-b)A_2}{n} +(2K+2)(A_2+a)+\Delta a-A_2(A_2+a)\right\rbrace \\
			&+\phi w\left\lbrace \frac{C_1\phi^{\frac{1}{2}}G^{\frac{1}{2}}(A_2+a)}{R}-\frac{4A_2G}{n}\right\rbrace \geq0,
		\end{split}
	\end{equation*}
	we know that there holds true
	\begin{equation*}
		\begin{split}
			&	BG\geq
			\frac{2\phi G^2}{n}-\frac{R\left|\nabla\phi \right|^2G^{\frac{3}{2}}}{C_1\phi^{\frac{1}{2}}}-\frac{C_1\phi^{\frac{1}{2}}G^{\frac{3}{2}}}{R}+
			\phi G\left\lbrace\frac{4(b-M_2)}{n}-2K-2a-2\right\rbrace.
		\end{split}
	\end{equation*}
	By (\ref{2.12}) again and the above expression we can see that there holds
	\begin{equation*}
		\begin{split}
			&	BG\geq
			\frac{2\phi G^2}{n}-\frac{2C_1\phi^{\frac{1}{2}}G^{\frac{3}{2}}}{R}+
			\phi G\left\lbrace\dfrac{4(b-M_2)}{n}-2K-2a-2\right\rbrace,
		\end{split}
	\end{equation*}
	i.e.
	\begin{equation}\label{2.37}
		\begin{split}
			&	B\geq
			\frac{2\phi G}{n}-\frac{2C_1\phi^{\frac{1}{2}}G^{\frac{1}{2}}}{R}+
			\phi \left\lbrace\dfrac{4(b-M_2)}{n}-2K-2a-2\right\rbrace.
		\end{split}
	\end{equation}
	Now, by the same arguments as deducing  (\ref{2.30}) from  (\ref{2.29}) we can easily see that there holds that on $B_p(R)$
	\begin{equation}\label{2.38}
		G\leq n\left\lbrace B+\frac{nC_1^2}{R^2}+ \frac{4(M_2-b)^+}{n}+2K+2(a)^++2\right\rbrace.\\\\\\
	\end{equation}
	
	On the other hand, if
	\begin{equation*}
		\begin{split}
			&\phi w\left\lbrace\frac{4(M_2-b)A_2}{n} +(2K+2)(A_2+a)+\Delta a-A_2(A_2+a)\right\rbrace\\
			&+\phi w\left\lbrace \frac{C_1\phi^{\frac{1}{2}}G^{\frac{1}{2}}(A_2+a)}{R}-\frac{4A_2G}{n}\right\rbrace \leq0,
		\end{split}
	\end{equation*}
	then, noting $w\geq 0$ we have
	\begin{equation*}
		\begin{split}
			&\frac{4(M_2-b)A_2}{n} +(2K+2)(A_2+a)+\Delta a-A_2(A_2+a)\\
			&+\frac{C_1\phi^{\frac{1}{2}}G^{\frac{1}{2}}(A_2+a)}{R}-\frac{4A_2G}{n} \leq0.
		\end{split}
	\end{equation*}
	Since $ a<0 $ and $ A_2+a<0$, by multiplying $-1$ and adopting almost the same discussions as those to obtain (\ref{2.31}) in case 3, on $B_p(R)$ we have
	\begin{equation}\label{2.39}
		G\leq n\left\lbrace \frac{(A_2+(a)^+)^2C_1^2}{3A_2^2R^2}+\frac{4(M_2-b)^+}{n}+\frac{(2K+2)(A_2+a)^+}{A_2}+\frac{(\Delta a)^+}{A_2}-(A_2+a)^+\right\rbrace.
	\end{equation}
	
	\noindent\textbf{Case 6.} The case $w\geq\frac{(A_2+a)n}{2A_2}$.
	
	For this situation, $ A_2(A_2+a)w<0 $ and $ \frac{2A_2^2w^2}{n}-A_2(A_2+a)w\geq0 $, then by  (\ref{2.32}), there holds
	\begin{equation}\label{2.40}
		\begin{split}
			BG\geq&
			\frac{2\phi G^2}{n}-2G(G-(A_2+a)w)^{\frac{1}{2}}\left|\nabla\phi \right|\\
			&+\phi G\left\lbrace -\frac{4A_2w}{n}+\dfrac{4(b-M_2)}{n}-2K-2a-2\right\rbrace\\
			&+\phi w\left\lbrace \frac{4(M_2-b)A_2}{n}+(2K+2)(A_2+a)+\Delta a\right\rbrace .
		\end{split}
	\end{equation}
	Noting that after removing the term $A_2(A_2+a)w$ in  (\ref{2.35}), then it becomes the same as  (\ref{2.40}), so all the discussions here will be almost the same as those in step 5. So the results here are almost the same as those in Case 5. As a consequece, we have
	\begin{equation}\label{2.41}
		G\leq n\left\lbrace B+\frac{nC_1^2}{R^2}+ \frac{4(M_2-b)^+}{n}+2K+2(a)^++2\right\rbrace
	\end{equation}
which is similar to (\ref{2.38}) or
	\begin{equation}\label{2.42}
		G\leq n\left\lbrace \frac{(A_2+(a)^+)^2C_1^2}{3A_2^2R^2}+\frac{4(M_2-b)^+}{n}+\frac{(2K+2)(A_2+a)^+}{A_2}+\frac{(\Delta a)^+}{A_2}\right\rbrace
	\end{equation}
which is similar to (\ref{2.39}).
	
Eventually, since $\frac{a}{A_2}\geq2$, then by combining  (\ref{2.34}),  (\ref{2.38}),  (\ref{2.39}) and the arguments in Case 6, we know that  (\ref{1.4}) holds true. Thus we accomplish the proof of Theorem 1.1.
\end{proof}

Next, we give proof of Corollary {\ref{c1.1}}.
\begin{proof}
	(1) Letting R$\rightarrow  +\infty$, then from  (\ref{1.3}), there holds
	\begin{equation*}
		(A_1+a)f
		\leq n\left\lbrace \left( 3K+3\right) \left(\frac{A_4}{A_1} \right)+\frac{8}{n}\left( M_1-b_1\right)+5A_4\right\rbrace
	\end{equation*}
	since $\left| \nabla f\right|^2\geq0$ and $M_1\geq0$.
	Then just using $A_1+a\geq 3A_1$ we can see that the required estimate follows at once. The proof of (2) is just the same as (1).
\end{proof}

\section{Proof of Theorem 1.2}
The main methods here are similar to those in the proof of Theorem 1.1. First of all, we need to establish the following lemma.

\begin{lemma}
	Let $u\leq D$ be a positive smooth solution of (\ref{equ*}) for some positive constant $D$. Let $f=\log\frac{u}{D}$, $F=t\left\lbrace \left|\nabla f\right|^2 +(A+a)f+2(M+b)-2f_t\right\rbrace$ for some constants $A$, $M$ to be determined later, and $h= \frac{\left| f\right|^2}{F}$. Then, on $B_p(2R)\times(0,\infty)$ there holds
\begin{equation}\label{3.1}
\begin{split}
\Delta F-F_t \geq & t\left\lbrace \left( A-2K-2-\left| \log D\right|\right) hF+\frac{(1+ht)^2F^2}{2nt^2}-\frac{2(1+ht)\left( M-a\log D\right) F}{nt}\right\rbrace\\
&+tf\left\lbrace\frac{(1+ht)(a-A)F}{nt}+\Delta a+a_t+\frac{2\left( A-a\right) \left( M-a\log D\right) }{n} \right\rbrace\\
&+t\left\lbrace 2a(M+b)+2a_t\log D+2\Delta b+\frac{2(M-a\log D)^2}{n}\right\rbrace-\frac{F}{t}-aF \\
&-t\left\lbrace \left( A+a\right) \left( a\log D+b\right) +\left|  \nabla b\right|^2+\left( 1+\left| \log D\right|\right) \left|\nabla a \right|^2 \right\rbrace-2\left\langle \nabla f,\nabla F\right\rangle.
\end{split}
\end{equation}
\end{lemma}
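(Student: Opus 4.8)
The plan is to run the same Bochner-type argument as in Lemma 2.1, now carrying the extra time derivative, the normalization by $D$, and the auxiliary quantity $h$ (which records $|\nabla f|^2$ relative to $F$, so $|\nabla f|^2 = hF$). Write $\mathcal L = \Delta - \partial_t$ for the heat operator and $W = |\nabla f|^2 + (A+a)f + 2(M+b) - 2f_t$, so that $F = tW$; since the metric is independent of $t$, one has the bookkeeping identity $\mathcal L F = t\,\mathcal L W - F/t$, and it therefore suffices to bound $\mathcal L W$ from below.

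First I would rewrite the equation: with $u = De^f$, equation (\ref{equ*}) becomes the pointwise identity $\mathcal L f = -|\nabla f|^2 - af - a\log D - b$. Feeding into this the relation $2f_t = hF + (A+a)f + 2(M+b) - F/t$ read off from $F = tW$ gives the normal form
\[
\Delta f = -\frac{(1+ht)F}{2t} + \frac{(A-a)f}{2} + \bigl(M - a\log D\bigr),
\]
which is what will later convert $(\Delta f)^2$ into the $F$-quadratic and $f$-linear terms in (\ref{3.1}). Then I would differentiate: apply the Bochner formula $\mathcal L|\nabla f|^2 = 2|\nabla^2 f|^2 + 2\langle\nabla f,\nabla\mathcal L f\rangle + 2\,\mathrm{Ric}(\nabla f,\nabla f)$, the product rule $\mathcal L((A+a)f) = (A+a)\mathcal L f + f(\Delta a - a_t) + 2\langle\nabla a,\nabla f\rangle$, the commutation $\mathcal L(f_t) = (\mathcal L f)_t$, and $\mathcal L(2(M+b)) = 2\Delta b - 2b_t$, to write $\mathcal L W$ as $2|\nabla^2 f|^2$ plus transport, curvature, and lower-order pieces. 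Into these I substitute: the expression for $\mathcal L f$ into $\nabla\mathcal L f$ (so the transport term unfolds); the relation $\nabla|\nabla f|^2 = \nabla F/t - (A+a)\nabla f - f\nabla a - 2\nabla b + 2\nabla f_t$ coming from $\nabla F = t\nabla W$, which peels off the clean $-\tfrac{2}{t}\langle\nabla f,\nabla F\rangle$; and once more $2f_t = hF + (A+a)f + 2(M+b) - F/t$ inside the term $2af_t$ produced by $-2(\mathcal L f)_t$, which is exactly what isolates the $-aF$ and $2a(M+b)$ contributions.

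The estimates are then: $2|\nabla^2 f|^2 \ge \tfrac{2}{n}(\Delta f)^2$ with $\Delta f$ in normal form — this generates the $\frac{(1+ht)^2F^2}{2nt^2}$, the $-\frac{2(1+ht)(M-a\log D)F}{nt}$, the $\frac{(1+ht)(a-A)F}{nt}f$, the $\frac{2(A-a)(M-a\log D)}{n}f$, and the $\frac{2(M-a\log D)^2}{n}$ terms, while the surviving $\frac{(A-a)^2 f^2}{2n}\ge 0$ is discarded; $\mathrm{Ric}\ge -K$, i.e. $2\,\mathrm{Ric}(\nabla f,\nabla f)\ge -2K|\nabla f|^2$; and Young's inequality on the surviving cross-terms $2\langle\nabla f,\nabla b\rangle \ge -|\nabla f|^2 - |\nabla b|^2$ and $2(1-\log D)\langle\nabla f,\nabla a\rangle \ge -(1+|\log D|)(|\nabla f|^2 + |\nabla a|^2)$, where $|\log D|$ enters because $\mathcal L f$ carries the term $-a\log D$ whose gradient is $-\log D\,\nabla a$. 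Multiplying through by $t$, subtracting $F/t$, and collecting: all copies of $|\nabla f|^2 = hF$ add up to $(A - 2K - 2 - |\log D|)hF$, the $b_t$ terms cancel between $\mathcal L(2(M+b))$ and $-2(\mathcal L f)_t$, the $\langle\nabla f,\nabla f_t\rangle$ and $f\langle\nabla f,\nabla a\rangle$ cross-terms cancel identically, and the residue is precisely (\ref{3.1}). The only real obstacle is the bookkeeping: one must track which copies of $|\nabla f|^2$ are turned into $hF$ and which are reabsorbed via $2f_t$ and $\nabla F$, verify those cancellations, and expand $(\Delta f)^2$ with enough care that its six cross-terms land exactly on the coefficients displayed. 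No analytic input beyond Lemma 2.1 is needed, and $u\le D$ is used only to make $f=\log(u/D)\le 0$ meaningful and to produce the harmless shift $\log u = \log D + f$.
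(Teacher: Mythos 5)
Your proposal is correct and follows essentially the same route as the paper's proof: rewrite the equation to put $\Delta f$ in the normal form involving $(1+ht)F$, apply the Bochner formula with $|\nabla^2 f|^2\geq(\Delta f)^2/n$ and $\mathrm{Ric}\geq-K$, handle the $\langle\nabla f,\nabla a\rangle$ and $\langle\nabla f,\nabla b\rangle$ cross-terms by the same Young inequalities (with the $|\log D|$ factor), reabsorb $2af_t$ through the definition of $F$ to produce $-aF$ and $2a(M+b)$, and expand $(\Delta f)^2$ discarding the nonnegative $\bigl(\tfrac{(A-a)f}{2}\bigr)^2$ term. Your packaging via $\mathcal{L}=\Delta-\partial_t$ and $W=F/t$ is only an organizational variant of the paper's separate computation of $\Delta F$ and $F_t$, and the cancellations you cite (of $b_t$, $\langle\nabla f,\nabla f_t\rangle$, and $f\langle\nabla f,\nabla a\rangle$) do occur exactly as claimed.
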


\begin{proof}
	By direct calculations, we derive the following identities
	\begin{equation}\label{3.2}
		\left| \nabla f\right|^2=\frac{F}{t}-(A+a)f-2(M+b)-2f_t,
	\end{equation}
	\begin{equation}\label{3.3}
		\Delta f=f_t-af-a\log D-b-\left| \nabla f\right|^2,
	\end{equation}
	and\\
	\begin{equation}\label{3.4}
		f_{tt}=\Delta f_t+af_t+2\left\langle\nabla f,\nabla f_t \right\rangle+b_t+a_tf+a_t\log D.
	\end{equation}
	By the definition of $F$, (\ref{3.3}) is equivalent to
	\begin{equation}\label{3.5}
		\Delta f=\frac{-F}{2t}-\frac{(A-a)f}{2}-\frac{\left| \nabla f\right|^2}{2}+M-a\log D.
	\end{equation}
	By virtue of Bochner Formula, there holds
	\begin{equation}\label{3.6}
		\begin{split}
			\Delta F&=t\left\lbrace \Delta\left| \nabla f\right|^2+(A+a)\Delta f+f\Delta a+2\left\langle \nabla a,\nabla f\right\rangle+2\Delta b-2\Delta f_t \right\rbrace\\
			&=t\left\lbrace2\left\langle \nabla f,\nabla\Delta f\right\rangle+2\left| D^2f\right|^2+2Ric\left\langle \nabla f,\nabla f \right\rangle\right\rbrace \\
			&\hspace*{1em}+t\left\lbrace f\Delta a+(A+a)\Delta f+2\left\langle \nabla a,\nabla f\right\rangle+2\Delta b-2\Delta f_t    \right\rbrace .
		\end{split}
	\end{equation}
	Next, substituting (\ref{3.2}) into (\ref{3.5}), we have
	\begin{equation*}
		\Delta f=\frac{-F}{t}+Af-f_t+2M+b-a\log D,
	\end{equation*}
	then we get
	\begin{equation*}
		\left\langle \nabla f,\nabla\Delta f\right\rangle=-\left\langle\nabla f,\frac{F}{t}\right\rangle+A\left| \nabla f\right|^2-\left\langle \nabla f,\nabla f_t\right\rangle+\left\langle \nabla f, \nabla b\right\rangle-\left\langle \nabla f,\nabla a \right\rangle\log D.
	\end{equation*}
	By Cauchy-Schwarz inequality, on $B_p(2R)$ it holds that
	\begin{equation*}
		\left| D^2f\right|^2\geq\frac{(\Delta f)^2}{n},
	\end{equation*}
	\begin{equation*}
		2\left\langle \nabla f,\nabla b\right\rangle\geq -(\left| \nabla f\right|^2+\left| \nabla b\right|^2),
	\end{equation*}
	and
	\begin{equation*}
		\left( 2-2\log D\right) \left\langle \nabla f,\nabla a\right\rangle\geq\left( -1-\left| \log D\right|\right) \left( \left| \nabla f\right|^2+\left| \nabla a\right|^2\right) .
	\end{equation*}
	Then, from (\ref{3.6}) we derive
	\begin{equation}\label{3.7}
		\begin{split}
			\Delta F\geq&-2\left\langle \nabla f,\nabla F\right\rangle +t\left\lbrace\left( 2A-2K-2-\left| \log D\right|\right) \left| \nabla f\right|^2+\frac{2(\Delta f)^2}{n}\right\rbrace\\
			&+t\left\lbrace f\Delta a+(A+a)\Delta f+2\Delta b-2\Delta f_t-2\left\langle \nabla f,\nabla f_t\right\rangle \right\rbrace \\
			&-t\left\lbrace \left| \nabla b\right|^2+\left( 1+\left|\log D \right|\right) \left| \nabla a\right|^2\right\rbrace. \\
		\end{split}
	\end{equation}
	
	As for $F_t$, first of all, we have:
	\begin{equation}\label{3.8}
		F_t=\frac{F}{t}+t\left\lbrace 2\left\langle \nabla f,\nabla f_t\right\rangle+(A+a)f_t+a_tf+2b_t-2f_{tt} \right\rbrace.
	\end{equation}
	By combining (\ref{3.3}), (\ref{3.4}), (\ref{3.7}), (\ref{3.8}) and also noting that
	\begin{equation*}
		-aF=t\left\lbrace -a\left| \nabla f\right|^2-(A+a)af-2a(M+b)+2af_t\right\rbrace,
	\end{equation*}
	we can see that there holds true
	\begin{equation}\label{3.9}
		\begin{split}
			&\Delta F-F_t\\
			&\geq-2\left\langle \nabla f,\nabla F\right\rangle-\frac{F}{t}-aF+t\left\lbrace \left( A-2K-2-\left| \log D\right|\right) \left| \nabla f\right|^2 \right\rbrace\\
			&\hspace*{1.2em}+t\left\lbrace \frac{2(\Delta f)^2}{n}+f\Delta a+a_tf+2a_t\log D+2a(M+b)+2\Delta b\right\rbrace\\
			&\hspace*{1.2em}-t\left\lbrace (A+a)\left( a\log D+b\right) +\left| \nabla b\right|^2+\left( 1+\left|\log D \right|\right) \left| \nabla a\right|^2\right\rbrace.
		\end{split}
	\end{equation}
	Next, substituting $hF=\left| \nabla f\right|^2$ into (\ref{3.5}) and noting that $\left( \frac{(A-a)f}{2}\right)^2\geq0$, then we deduce that
	\begin{equation}\label{3.10}
		\begin{split}
			(\Delta f)^2=&\left(  \frac{-(1+ht)F}{2t}+\frac{(A-a)f}{2}+\left( M-a\log D\right) \right)^2\\
			\geq&\frac{(1+ht)^2F^2}{4t^2}+(M-a\log D)^2-\frac{(1+ht)(A-a)Ff}{2t}\\
			&-\frac{(1+ht)(M-a\log D)F}{t}+(A-a)(M-a\log D)f.
		\end{split}
	\end{equation}
Then, just substituting (\ref{3.10}) into (\ref{3.9}) we obtain (\ref{3.1}) at once.
\end{proof}

Now we are going to give the proof of Theorem 1.2, and it is divided into two parts. Without loss of generality, we assume $F>0$, since otherwise the result will be trivial.
\begin{proof}[\textbf{Proof of Theorem 1.2} ]
	Since on $B_{p}(2R)\times (0,T]$ for any $T>0$, $a , b ,a_t, |\nabla a|$ and $|\nabla b|$ are bounded and $\Delta b$ has a lower bound, then there exists a constant $M$ such that
	\begin{equation}\label{3.11}
		\left\{
		\begin{array}{lr}	
			M+b\geq0,&\\
			M-a\log D\geq0,&\\		
			\frac{2(M-a\log D)^2}{n}+2a(M+b)-(A+a)(a\log D+b)&\\
			+2a_t\log D+2\Delta b-\left\lbrace \left|  \nabla b\right|^2+\left( 1+\left| \log D\right|\right) \left|\nabla a \right|^2 \right\rbrace\geq0.& 	
		\end{array}
		\right.
	\end{equation}
Consequently, letting $A$ be a constant strictly larger than $[a]^+$, then from (\ref{3.1}) we obtain
\begin{equation}\label{3.12}
\begin{split}
\Delta F-F_t\geq &t\left\lbrace (A-2K-2-\left| \log D\right| )hF+\frac{(1+ht)^2F^2}{2nt^2}-\frac{2(1+ht)(M-a\log D)F}{nt}\right\rbrace\\
&+tf\left\lbrace\frac{(1+ht)(a-A)F}{nt}+\Delta a+a_t+\frac{2(A-a)(M-a\log D)}{n} \right\rbrace \\
&-2\left\langle \nabla f,\nabla F\right\rangle-\frac{F}{t}-aF.
\end{split}
\end{equation}
	
Next, we consider the same cut-off function $ \phi $ as in Section 2, and denote $\phi F$ by $\lambda$. Let $(x_0, t_0)\in B_p(2R)\times (0,T]$ be the maximum point of $\lambda$, then
\begin{equation*}
\nabla\lambda(x_0, t_0)=0, \hspace*{1em} \Delta\lambda(x_0, t_0)\leq0,\hspace*{1em}  \mbox{and} \hspace*{1em} F_t\geq0.
\end{equation*}
Then, by the same discussions as to obtain (\ref{2.16}), we also have $BF\geq\phi\Delta F$ and $\phi\nabla F=-F\nabla\phi$ at $(x_0, t_0)$. From now on, all the discussions are considered at the point $(x_0, t_0) $, for simplicity, we still write $t_0$ as t and omit $x_0$.
	
By the above discussions, (\ref{3.12}) changes into:
	\begin{equation}\label{3.13}
		\begin{split}
			BF&\geq\phi\Delta F\\
			&\geq\phi t\left\lbrace (A-2K-2-\left| \log D\right| )hF+\frac{(1+ht)^2F^2}{2nt^2}-\frac{2(1+ht)(M-a\log D)F}{nt}\right\rbrace\\
			&\hspace*{1.2em}+\phi tf \left\lbrace\frac{(1+ht)(a-A)F}{nt}+\Delta a+a_t+\frac{2(A-a)(M-a\log D)}{n} \right\rbrace \\
			&\hspace*{1.2em}-2\phi\left\langle \nabla f,\nabla F\right\rangle-\frac{\phi F}{t}-a\phi F.
		\end{split}
	\end{equation}
	
Now, at $(x_0,t_0)$ we need to consider the following cases:
	
	\noindent\textbf{Case I.} The case
	\begin{equation*}
		f\left\lbrace\frac{(1+ht)(a-A)F}{nt}+\Delta a+a_t+\frac{2(A-a)(M-a\log D)}{n} \right\rbrace \leq 0.
	\end{equation*}
	
	For the present situation, since $f\leq0$, we have
	\begin{equation*}
		\frac{(1+ht)(a-A)F}{nt}+\Delta a+a_t+\frac{2(A-a)(M-a\log D)}{n}\geq0.
	\end{equation*}	
	Noting $A>[a]^+$, we derive
	\begin{align}
		F&\leq\frac{nt}{(A-a)(1+ht)}\left( \Delta a+a_t+\frac{2(A-a)(M-a\log D)}{n}\right)\nonumber\\
		&\leq \frac{nt}{(A-a)}\left( \Delta a+a_t+\frac{2(A-a)(M-a\log D)}{n}\right)\nonumber\\
		&\leq\frac{nT[\Delta a+a_t]^+}{(A-[a]^+)} +2T[M-a\log D]^+.\label{3.14}
	\end{align}	
In fact, if furthermore assuming that $$\Delta a+a_t+\frac{2}{n}(A-a)(M-a\log D)\leq 0,$$
then we have that there holds true
\begin{equation*}
f\left\lbrace\frac{(1+ht)(a-A)F}{nt}+\Delta a+a_t+\frac{2(A-a)(M-a\log D)}{n} \right\rbrace \geq 0,
\end{equation*}	
and this becomes a special case of Case II. So, here we always assume $$ \Delta a+a_t+\frac{2}{n}(A-a)(M-a\log D)>0$$ to ensure (\ref{3.14}) make sense.
\medskip
	
	\noindent\textbf{Case II.} The case	
	\begin{equation*}
		f\left\lbrace\frac{(1+ht)(a-A)F}{nt}+\Delta a+a_t+\frac{2(A-a)(M-a\log D)}{n} \right\rbrace \geq 0.
	\end{equation*}	
	
	Now, from (\ref{3.13}) we obtain
	\begin{equation}\label{3.15}
		\begin{split}
			BF&\geq\phi\Delta F\\
			&\geq\phi t\left\lbrace \left( A-2K-2-\left| \log D\right| \right) hF+\frac{(1+ht)^2F^2}{2nt^2}-\frac{2(1+ht)(M-a\log D)F}{nt}\right\rbrace\\
			&\hspace*{1.2em}-2\phi\left\langle \nabla f,\nabla F\right\rangle-\frac{\phi F}{t}-a\phi F.
		\end{split}
	\end{equation}
	By $ \phi\nabla F=-F\nabla\phi $, we deduce that
	\begin{equation*}
		-2\phi\left\langle \nabla f,\nabla F\right\rangle=2\left\langle\nabla f,\nabla \phi \right\rangle F,
	\end{equation*}	
	then by (\ref{2.12}) and $hF=\left| \nabla f\right|^2$, it turns out that
	\begin{align}
		2\left\langle\nabla f,\nabla \phi \right\rangle F&\geq-2\left| \nabla \phi\right| \left| \nabla f\right|F\nonumber\\
		&=-2\left( \frac{\left| \nabla \phi\right|^2}{\phi}\right)^{\frac{1}{2}}\phi^{\frac{1}{2}}F\left| \nabla f\right|\nonumber\\
		&\geq-\frac{2C_1\phi^{\frac{1}{2}}F^{\frac{3}{2}}h^\frac{1}{2}}{R}.\label{3.16}
	\end{align}	
	Moreover, we have
	\begin{equation}\label{3.17}
		\left\{
		\begin{array}{lr}
			-a\phi F\geq-\left[ a\right] ^+\phi F,&\\
			\phi t\left( A-2K-2-\left| \log D\right| \right)hF\geq \phi t\left\lbrace -\left[ -\left( A-2K-2-\left| \log D\right| \right)\right]^+ \right\rbrace hF.&
		\end{array}
		\right.
	\end{equation}
	Combining (\ref{3.16}) and (\ref{3.17}), in view of (\ref{3.15}) we deduce that
	\begin{equation}\label{3.18}
		\begin{split}
			BF\geq&
			-\frac{2C_1\phi^{\frac{1}{2}}F^{\frac{3}{2}}h^\frac{1}{2}}{R}-\frac{\phi F}{t}-\left[ a\right] ^+\phi F\\
			&+\phi t\left\lbrace -\left[ -\left( A-2K-2-\left| \log D\right| \right)\right]^+hF\right\rbrace \\
			&+\phi t\left\lbrace \frac{(1+ht)^2F^2}{2nt^2}-\frac{2(1+ht)(M-a\log D)F}{nt}\right\rbrace.
		\end{split}
	\end{equation}
	Then, multiplying by $\phi t$ on both sides of (\ref{3.18}), and noting that on $B_{2R}(p)$: $0<\phi\leq1$, $\phi^2\leq\phi$ and $M>a\log D$, from (\ref{3.18}) we deduce that
	\begin{equation*}
		\begin{split}
			Bt\lambda\geq&
			-\frac{2C_1t\lambda^{\frac{3}{2}}h^\frac{1}{2}}{R}-\lambda-\left[ a\right] ^+t\lambda-\lambda t^2h\left[ -\left( A-2K-2-\left| \log D\right| \right)\right]^+\\
			& +\frac{(1+ht)^2\lambda^2}{2n}-\frac{2t(1+ht)(M-a\log D)\lambda}{n},
		\end{split}
	\end{equation*}		
	i.e.
	\begin{equation*}
		\begin{split}
			Bt\geq&
			-\frac{2C_1t\lambda^{\frac{1}{2}}h^\frac{1}{2}}{R}-1-\left[ a\right] ^+t-t^2h\left[ -\left( A-2K-2-\left| \log D\right| \right)\right]^+\\
			&  +\frac{(1+ht)^2\lambda}{2n}-\frac{2t(1+ht)(M-a\log D)}{n},
		\end{split}
	\end{equation*}		
	hence, there holds true
	\begin{equation*}
		\begin{split}
			\frac{(1+ht)^2\lambda}{2n}\leq& Bt+\frac{2C_1t\lambda^{\frac{1}{2}}h^\frac{1}{2}}{R}
			+1+\left[ a\right] ^+t+t^2h\left[ -\left( A-2K-2-\left| \log D\right| \right)\right]^+\\
			&  +\frac{2t(1+ht)(M-a\log D)}{n}.
		\end{split}
	\end{equation*}		
	By virtue of Young's inequality we have
	\begin{equation*}
		\frac{2C_1t\lambda^{\frac{1}{2}}h^\frac{1}{2}}{R}\leq\frac{\lambda(1+ht)^2}{4n}+\frac{4nC_1^2ht^2}{R^2(1+ht)^2},
	\end{equation*}	
	consequently we derive that
	\begin{equation*}
		\begin{split}
			\frac{(1+ht)^2\lambda}{4n}\leq& Bt+1+\left[ a\right] ^+t+\frac{4nC_1^2ht^2}{R^2(1+ht)^2}+t^2h\left[ -\left( A-2K-2-\left| \log D\right| \right)\right]^+\\
			&  +\frac{2t(1+ht)(M-a\log D)}{n}.
		\end{split}
	\end{equation*}	
	It turns out that there holds:
	\begin{equation}\label{3.19}
		\begin{split}
			\lambda\leq&\frac{4n}{(1+ht)^2} \left\lbrace Bt+1+\left[ a\right] ^+t+\frac{4nC_1^2ht^2}{R^2(1+ht)^2}+t^2h\left[ -\left( A-2K-2-\left| \log D\right| \right)\right]^+\right\rbrace \\
			& +\frac{4n}{(1+ht)^2}\left\lbrace  \frac{2t(1+ht)(M-a\log D)}{n}\right\rbrace .
		\end{split}
	\end{equation}	
	Moreover, it is well-known that the following claim holds: If $x\geq0$, then, for integer $n\geq1$ there holds true that $$(1+x)^n\geq1+nx\geq nx.$$
	Then, by virtue of the claim we have
	\begin{align}
		&\frac{ht^2}{(1+ht)^4}\leq\frac{t^2h}{4ht}=\frac{t}{4},\label{3.20}\\
		&\frac{ht^2}{(1+ht)^2}\leq\frac{t^2h}{2ht}=\frac{t}{2},\label{3.21}\\
		&\frac{1+ht}{(1+ht)^2}\leq\frac{1}{1+ht}\leq1.\label{3.22}
	\end{align}	
	Combining (\ref{3.19}) to (\ref{3.22}), on $B_p(2R)\times(0, T]$ we derive
	\begin{equation}\label{3.23}
		\begin{split}
			\lambda\leq&4n \left\lbrace Bt
			+1+\left[ a\right] ^+t+\frac{4nC_1^2t}{R^2}+\frac{t}{2}\left[ -\left( A-2K-2-\left| \log D\right| \right)\right]^+\right\rbrace \\
			& +4n\left\lbrace  \frac{2t\left[ M-a\log D\right] ^+}{n}\right\rbrace\\
			\leq&4n \left\lbrace BT
			+1+\left[ a\right] ^+T+\frac{4nC_1^2T}{R^2}+\frac{T}{2}\left[ -\left( A-2K-2-\left| \log D\right| \right)\right]^+ \right\rbrace \\
			& +4n\left\lbrace \frac{2T\left[ M-a\log D\right] ^+}{n}\right\rbrace.
		\end{split}
	\end{equation}	
	Ultimately, it's obvious that the following expressions hold true
	\begin{equation}\label{3.24}
		F\mid_{B_p(r)}=\phi F\mid_{B_p(r)}=\lambda\mid_{B_p(r)}\leq\lambda(x_0,t_0).
	\end{equation}	
	Therefore combining (\ref{3.14}) and (\ref{3.23}) and also noting that $T>0$ is arbitrary, we obtain (\ref{1.5}), then we accomplish the proof of Theorem 1.2.
\end{proof}	

\begin{remark}\label{r3.1}
	Now we can give some explanations about the previous Remark \ref{r1.1}. If a is a constant, e.g. $a\equiv A$, then from (\ref{3.13}) we know that the arguments on Case 1 are completely unnecessary, so we don't have to assume $u$ has a upper bound by letting $D\equiv1$. Especially, when $a\equiv0$, from (\ref{3.11}) we know that it's even unnecessary to assume $b$ is bounded, but in this situation, $b$ will appear on the left side of (\ref{1.5}) since $M+b\geq0$ may not be true.
\end{remark}

\section{Discussions about Theorem \ref{thm3}}	
In this section, we apply our results in Theorem \ref{thm1} and \ref{thm2} to logarithmic Schr\"odinger equation (\ref{1.8}). To this end, we begin to prove Theorem \ref{thm3}.
\begin{proof}[\textbf{Proof of Theorem \ref{thm3}}]
(1)
From the beginning of proof of (1) of Theorem \ref{thm1}, we know that all our requirements about $M_1$ are
\begin{equation*}
(A_1+a)(M_1-b)+(2-2A_1)M_1+\dfrac{2(b-M_1)^2}{n}+2KM_1-\left| \nabla a\right|^2- \left| \nabla b\right|^2\geq0
\end{equation*}
and
\begin{equation*}
\frac{4(M_1-b)A_1}{n}+(2K+2)(A_1+a)+\Delta a\geq0.
\end{equation*}
For (\ref{1.9}), $a\equiv2$, $A_1=1$, $K=0$ and $b(x)=V(x)$, then the above requirements become
\begin{equation}\label{4.1}
\left\{
\begin{array}{lr}	
3(M_1-V)+\frac{2(M_1-V)^2}{n}-\left| \nabla V\right|^2\geq0,&\\	
\frac{4(M_1-b)A_1}{n}+6\geq0.& 	
\end{array}
\right.
\end{equation}
Specifically, we can take
\begin{equation}\label{4.2}
\left\{
\begin{array}{lr}	
M_1-V\geq\frac{1}{3}\sup\limits_{R^n}\left| \nabla V\right|^2,&\\	
M_1-V\geq-\frac{3n}{2}.& 	
\end{array}
\right.
\end{equation}
Then we may choose $M_1=\sup\limits_{R^n}\left| V\right| +\frac{1}{3}\sup\limits_{R^n}\left| \nabla V\right|^2$. By (1) of Theorem \ref{thm1}, letting $R\longrightarrow+\infty$, we obtain
\begin{equation*}
3f\leq16n+8(M_1-V)^+=16n+8\sup\limits_{R^n}\left| V\right|+\frac{8}{3}\sup\limits_{R^n}\left| \nabla V\right|^2-8V,
\end{equation*}
thus
\begin{equation}\label{4.3}
u\leq e^{\frac{1}{3}\left\lbrace 16n+8\sup\limits_{R^n}\left| V\right|+\frac{8}{3}\sup\limits_{R^n}\left| \nabla V\right|^2-8V\right\rbrace},
\end{equation}
i.e., if $V(x)$, $\left| \nabla V(x)\right|$ is bounded, then the positive solutions to equation (\ref{1.8}) must be bounded. Especially, when $V\geq0$ is a constant, from (\ref{4.3}) we know that the upper bound of $u$ is not related to $V$.

(2)
On the other hand, we can also apply Theorem \ref{thm2} to this equation. In this situation, we may assume $u_t=0$, then we can let $T\longrightarrow\infty$ and $R\longrightarrow+\infty$ in (\ref{1.5}). By Remark \ref{r3.1}, if we take $A=a=2, D=1, b(x)=V(x), K=0$, then $$M=\frac{1}{2}\sup\limits_{R^n}\left| \Delta V\right|+\frac{1}{4}\sup\limits_{R^n}\left| \nabla V\right|+\sup\limits_{R^n}\left| V\right| $$ ensures
(\ref{3.11}) to be nonnegative and $M+b\geq0, M\geq0$. By (\ref{1.5}), we have
\begin{equation*}
4f\leq4n\left\lbrace 2+\frac{2}{n}\left( \frac{1}{2}\sup\limits_{R^n}\left| \Delta V\right|+\frac{1}{4}\sup\limits_{R^n}\left| \nabla V\right|+\sup\limits_{R^n}\left| V\right|\right) \right\rbrace,
\end{equation*}
i.e.
\begin{equation}\label{4.4}
u\leq e^{\left\lbrace 2n+\sup\limits_{R^n}\left| \Delta V\right| +\frac{1}{2}\sup\limits_{R^n}\left| \nabla V\right|+2\sup\limits_{R^n}\left| V\right|\right\rbrace}.
\end{equation}
\end{proof}

\noindent\textbf{Acknowledgements} The author is supported partially by NSFC grant (No.11731001) and NSFC grant (No.11971400) and is grateful to Prof. Youde Wang for many constructive opinions.

\end{document}